\newtheorem{theorem}{Theorem}[section]
\newtheorem{lemma}[theorem]{Lemma}
\newtheorem{proposition}[theorem]{Proposition}
\newtheorem{fact}[theorem]{Fact}
\theoremstyle{definition}
\newtheorem{definition}[theorem]{Definition}
\theoremstyle{remark}
\newtheorem{remark}[theorem]{Remark}
\newtheorem{question}[theorem]{Question}
\numberwithin{equation}{section}
\DeclareMathOperator{\volu}{vol}
\DeclareMathOperator{\diam}{diam}
\DeclareMathOperator{\inte}{int}
\DeclareMathOperator{\conv}{conv}
\DeclareMathOperator{\pos}{pos}
\begin{document}

\title{Equality cases in Viterbo's conjecture and isoperimetric billiard inequalities}

\author[A.~Balitskiy]{Alexey~Balitskiy}

\email{balitski@mit.edu}
\thanks{The author is supported by the Russian Foundation for Basic Research Grant 18-01-00036.}

\address{Dept. of Mathematics, Massachusetts Institute of Technology, 182 Memorial Dr., Cambridge, MA 02142}
\address{Dept. of Mathematics, Moscow Institute of Physics and Technology, Institutskiy per. 9, Dolgoprudny, Russia 141700}
\address{Institute for Information Transmission Problems RAS, Bolshoy Karetny per. 19, Moscow, Russia 127994}

\keywords{Billiards, Minkowski norm, Viterbo's conjecture, Permutohedron.}

\begin{abstract}
We apply the billiard technique to deduce some results on Viterbo's conjectured inequality between the volume of a convex body and its symplectic capacity. We show that the product of a permutohedron and a simplex (properly related to each other) delivers equality in Viterbo's conjecture. Using this result as well as previously known equality cases, we prove some special cases of Viterbo's conjecture and interpret them as isoperimetric-like inequalities for billiard trajectories.
\end{abstract}

\maketitle

\section{Introduction}

Claude Viterbo~\cite{viterbo2000metric} conjectured an isoperimetric inequality for any (normalized) symplectic capacity and the volume of a convex body $X\subset \mathbb R^{2n}$:
\begin{equation}
\label{eq:viterbo}
\volu (X) \ge \frac{c(X)^n}{n!}.
\end{equation}
The minimum is supposed to be attained (perhaps, not uniquely) on the symplectic images of the Euclidean ball. This inequality is proven up to a constant factor to the power of $n$ in~\cite{artstein2008m}. We investigate the case when $c$ is the Hofer--Zehnder symplectic capacity $c_{HZ}$.

The computation of symplectic capacities is considered difficult, so we restrict to the case of convex bodies that are Lagrangian products of a component in $V = \mathbb R^n$ with $q$-coordinates and a component in $V^* = \mathbb R^n$ with $p$-coordinates. For such bodies there is a simple geometric interpretation, established in~\cite{artstein2014bounds}, of the Hofer--Zehnder capacity. Namely,
$$
c_{HZ}(K \times T) = \xi_T(K),
$$
where $\xi_T(K)$ denotes the length of the shortest closed billiard trajectory in a convex body $K \subset V$ with geometry of lengths determined by another convex body $T \subset V^*$ by the formula
$$
\|q\|_T = \max_{p\in T} \langle p, q \rangle, \quad q \in V.
$$
Here we assume that $T$ contains the origin in the interior, but we do not assume that $T$ is symmetric, so our ``norm'' function $\|\cdot\|_T$ is not symmetric in general. The shortest closed billiard trajectory will be understood in the sense of K. Bezdek and D. Bezdek~\cite{bezdek2009shortest} as the closed polygonal line of minimal $\|\cdot\|_T$-length not fitting into a translate of the interior of $K$. In Section~\ref{sec:billiard} we introduce this billiard technique in detail. If we take $K$ to be symmetric with respect to the origin, and if we set $T = K^\circ = \{p \in V^*: \langle q, p \rangle \le 1 ~\forall q\in K\}$ to be the polar body of $K$, then inequality~(\ref{eq:viterbo}) becomes
\begin{equation}
\label{eq:mahler}
\volu (K \times K^\circ) \ge \frac{c_{HZ}(K\times K^\circ)^n}{n!},
\end{equation}
which is equivalent (as proven in~\cite{artstein2014from}) to the longstanding Mahler's conjecture in convex geometry. This is one of the reasons why inequality~(\ref{eq:viterbo}) might be considered difficult and interesting.

Our first result concerns certain convex bodies for which the equality in~(\ref{eq:viterbo}) holds and for which we do not know whether they are symplectic balls or not. One known (since~\cite{artstein2014from}) family of such examples consists of bodies $X = H \times H^\circ$, where $H$ is a Hanner (or Hanner--Hansen--Lima) polytope. The Hanner bodies are famous for being minimizers in Mahler's conjectured inequality: they deliver equality in~(\ref{eq:mahler}).
We introduce another family of examples. In Section~\ref{sec:equality}, we prove that
$$
\volu (P_n \times \triangle_n^\circ) = \frac{c_{HZ}(P_n \times \triangle_n^\circ)^n}{n!}.
$$
Here $\triangle_n$ denotes a regular $n$-dimensional simplex centered at the origin; $\triangle_n^\circ$ is its polar simplex; $P_n$ denotes a certain $n$-dimensional permutohedron, which can be thought of as the Minkowski sum of all the edges of $\triangle_n$. For example, permutohedron $P_2$ is a regular hexagon; permutohedron $P_3$ is a truncated regular octahedron whose hexagonal facets are regular hexagons (Figure~\ref{pic:permutohedron}). We discuss multiple definitions of permutohedron, and their relevant properties, in Section~\ref{sec:permutohedron}.

\begin{figure}[h]
\centering
\includegraphics[width=0.4\textwidth]{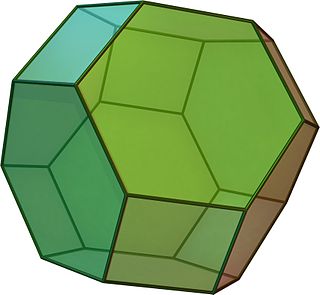}
\caption[]{Permutohedron $P_3$ is a properly truncated octahedron\footnotemark}
\label{pic:permutohedron}
\end{figure}

\footnotetext{The figure by w:en:User:Cyp@wikimedia, distributed under CC BY-SA 3.0 license.}

The rest of results in this paper are some very special cases of Viterbo's conjecture. In Section~\ref{sec:inequality}, we prove that Viterbo's conjecture (for $c_{HZ}$) holds for the bodies of the form $K \times \text{(a simplex $\triangle_n$)}$ or $K \times \text{(a parallelotope $\square_n$)}$, where $K \subset \mathbb{R}^n$ is any convex body, and the products are Lagrangian:
$$
\volu (K \times \triangle_n) \ge \frac{c_{HZ}(K \times \triangle_n)^n}{n!},
$$
$$
\volu (K \times \square_n) \ge \frac{c_{HZ}(K \times \square_n)^n}{n!}.
$$
The latter can be interpreted as a sharp isoperimetric-like inequality for billiard trajectories in the $\ell_1$-norm:
$$
\xi_{\square_n}(K) \le \left(2^n n! \volu (K)\right)^{1/n}.
$$
It seems that in other norms (corresponding to a fixed $T$, possibly non-symmetric) there are no known sharp inequalities bounding $\xi_T(K)$ in terms of $\volu K$. For example, in the Euclidean case ($T = B^n$ is the unit Euclidean $n$-ball) the inequality
$$
\xi_{B^n}(K) \le c_n \volu (K) ^{1/n}
$$
holds for any convex body $K \subset \mathbb{R}^n$, possibly non-symmetric. This inequality follows from the results of~\cite{artstein2008m} with factor $c_n = c \sqrt{n}$, for some absolute constant $c$.
But the optimal value of $c_n$, as well as the corresponding ``critical'' body $K$, seems to be unknown even for $n=2$.

\textbf{Acknowledgments.}
The author thanks Roman~Karasev for his constant attention to this work. The author also thanks Felix Schlenk for his helpful comments.

\section{Billiards in a Minkowski norm}
\label{sec:billiard}

Billiard dynamics in Minkowski (more generally, in Finsler) geometry corresponds physically to the propagation of light in a homogeneous anisotropic (respectively, in an inhomogeneous anisotropic) medium, with reflections in the boundary according to the Huygens principle (see~\cite{gutkin2002billiards}). We explain the relevant definitions in this section, modifying them for the non-smooth non-symmetric case.

We work in a pair of $n$-dimensional real vector space $V = \mathbb{R}^n$ and $V^* = \mathbb{R}^n$ with a canonical perfect pairing $\langle \cdot, \cdot \rangle$. If we identify $V$ with $V^*$ by sending a basis of $V$ to the dual basis of $V^*$, the pairing $\langle \cdot, \cdot \rangle$ becomes an inner product in $V$. The Euclidean norm $|v| = \sqrt{\langle v, v \rangle}$ will be denoted throughout the paper by the single bars. The double bars will denote a Minkowski norm (or a gauge function), as introduced below.

A \emph{convex body} $K \subset V = \mathbb{R}^n$ is a compact convex set with non-empty interior. The \emph{polar body} to a body $K \subset V$ is defined as $K^\circ = \{p \in V^*: \langle q, p \rangle \le 1 ~\forall q\in K\}$.

Let $V$ be endowed with the ``norm'' with unit ball $T^\circ = \{q \in V: \langle q, p \rangle \le 1 ~\forall p\in T\}$. We follow the notation of~\cite{akopyan2016elementary} and denote this norm by $\|\cdot\|_T$.
By definition, $\|q\|_T = \max\limits_{p \in T} \langle p, q \rangle$, where $\langle \cdot, \cdot \rangle : V^* \times V \rightarrow \mathbb{R}$ is the canonical bilinear form of the duality between $V$ and $V^*$. Here we assume that $T$ contains the origin but is not necessarily centrally symmetric. Therefore, our norms might not be symmetric; in general, $\|q\|_T \neq \|-q\|_T$. (Sometimes, such ``norms'' are called \emph{gauges}.)

The \emph{momentum} $p \in \partial T \subset V^*$ of the trajectory fragment $q \to q'$ is defined as a linear functional reaching its maximum at $q' - q$. If $T$ is not strictly convex, then there is an ambiguity in the definition of $p$.

The cone $N_K(q)$ of outer normals is defined as
$$
N_K(q) = \{n \in V^*: \langle n, q' - q\rangle \le 0 \ \forall q' \in K\}.
$$

The \emph{generalized reflection law} is the following relation:
\begin{equation}
\label{eq:reflection}
p' - p \in -N_K(q),
\end{equation}
where $p$ and $p'$ stand for the momenta of the billiard trajectory before and after the reflection at the point $q$.

\begin{definition}
\label{def:billiard}
A closed polygonal line $q_0 \to q_1 \to \ldots \to q_m = q_0$, with $q_i \in \partial K$, $q_i \neq q_{i+1}$, will be called a (generalized) \emph{closed billiard trajectory} in the configuration $K\times T$ if one can choose momenta $p_i \in \partial T$ for $q_{i} \to q_{i+1}$, $0 \le i < m$, so that the generalized reflection law (\ref{eq:reflection}) holds for each bounce $q_i$.
\end{definition}

By a \emph{classical billiard trajectory} we mean a trajectory that meets only smooth points of $\partial K$ and smooth points of $\partial T$. Additionally, in what follows we do not allow a classical trajectory to pass the same path multiple times.

We set
$$
\mathcal P_m(K) = \{ (q_1,\ldots, q_m) : \{q_1,\ldots, q_m\} \ \text{doesn't fit into}\ (\inte K + t)\ \text{with} \ t\in V \} =
$$
$$
= \{(q_1,\ldots, q_m) : \{q_1,\ldots, q_m\} \ \text{doesn't fit into}\ (\alpha K+t)\ \text{with}\ \alpha\in (0,1),\ t\in V \}
$$
and
$$
\xi_T(K) = \min_{Q \in \mathcal Q_T(K)} \ell_T(Q),
$$
where $Q = (q_1,\ldots, q_m),\ m \ge 2,$ ranges over the set $\mathcal Q_T(K)$ of all closed generalized billiard trajectories in $K$ with geometry defined by $T$. (Here we denote the length $\ell_T (q_1,\ldots, q_m) = \sum_{i=1}^m \|q_{i+1} - q_i\|_T$.)

It might not be clear if the minimum in the definition of $\xi_T(K)$ is attained, but it is true, and we can say even more:

\begin{theorem}
\label{thm:bezdeks}
For any convex bodies $K \subset V$, $T \subset V^*$ ($T$ is smooth) containing the origins of $V$ and $V^*$ in their interiors, the following holds:
$$
\xi_T(K) = \min_{m\ge 2} \min_{Q\in \mathcal P_m(K)} \ell_T(Q) = \min_{2\le m\le n+1} \min_{Q\in \mathcal P_m(K)} \ell_T(Q).
$$
\end{theorem}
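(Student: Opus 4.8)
Write $b_T(K):=\min_{m\ge 2}\min_{Q\in\mathcal P_m(K)}\ell_T(Q)$ for the right-hand side. The plan is to prove $\xi_T(K)\ge b_T(K)$ (every closed billiard trajectory lies in some $\mathcal P_m(K)$), then $\xi_T(K)\le b_T(K)$ together with the bound $m\le n+1$ (the minimum defining $b_T(K)$ is realized by a closed billiard trajectory with at most $n+1$ vertices). For the first inequality I would take a genuine closed generalized billiard trajectory $Q=(q_1,\dots,q_m)$ with momenta $p_i\in\partial T$ (each $p_i$ maximizing $\langle\,\cdot\,,q_{i+1}-q_i\rangle$ over $T$), indices cyclic, and set $r_i:=p_i-p_{i-1}\in-N_K(q_i)$. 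Telescoping around the cycle gives $\sum_i r_i=0$, and the $p_i$ are not all equal, since otherwise $0=\langle p_1,\sum_i(q_{i+1}-q_i)\rangle=\sum_i\langle p_i,q_{i+1}-q_i\rangle=\sum_i\|q_{i+1}-q_i\|_T>0$; so some $r_{i_0}\ne 0$. If $Q$ fit into $\inte K+t$, i.e.\ $q_i-t\in\inte K$ for all $i$, then, since $q_i$ minimizes $\langle r_i,\,\cdot\,\rangle$ over $K$ and a nonzero linear functional does not attain its minimum over $K$ at an interior point, we get $\langle r_i,q_i-t\rangle>\langle r_i,q_i\rangle$ whenever $r_i\ne 0$, that is $\langle r_i,t\rangle<0$, while $\langle r_i,t\rangle=0$ if $r_i=0$; summing contradicts $\sum_i r_i=0$. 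Hence $Q\in\mathcal P_m(K)$, so $\ell_T(Q)\ge b_T(K)$, and taking the infimum over $\mathcal Q_T(K)$ gives $\xi_T(K)\ge b_T(K)$.

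\textbf{Reduction to $m\le n+1$ and existence of a minimizer.} For the reverse inequality I would first invoke Helly's theorem: $(q_1,\dots,q_m)\in\mathcal P_m(K)$ means $\bigcap_i\inte(q_i-K)=\emptyset$, so already some subfamily of at most $n+1$ of these open convex sets has empty intersection; the corresponding subset of vertices has at least two elements (a single point always fits) and, joined cyclically, spans a polygon in $\mathcal P_{m'}(K)$ with $2\le m'\le n+1$ and no larger $\ell_T$-length, by the triangle inequality for skipped vertices. Thus $b_T(K)=\min_{2\le m\le n+1}\min_{\mathcal P_m(K)}\ell_T$. For each such $m$ the set $\mathcal P_m(K)$ is closed and translation invariant, and $\ell_T$ is coercive modulo translation, so the minimum is attained at some $Q^*=(q_1,\dots,q_m)$, $m\le n+1$. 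A short scaling argument then shows $\conv Q^*$ fits into a closed translate of $K$: otherwise a dilate $\mu\,\conv Q^*$ with $\mu<1$ would still avoid all open translates of $K$ while being strictly shorter, contradicting minimality. Translating $K$, I may therefore assume $\conv Q^*\subseteq K$, and then $\conv Q^*\cap\partial K\ne\emptyset$ (else $Q^*\subseteq\inte K$, which fits).

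\textbf{The minimizer obeys the reflection law.} This is the crux. I would encode non-fitting analytically by the convex function $\Psi(Q):=\inf_t\max_i\phi(q_i-t)$, where $\phi(x)=\sup_{\|u\|=1}(\langle u,x\rangle-h_K(u))$ with $h_K$ the support function of $K$; thus $\phi$ is convex, $\{\phi<0\}=\inte K$, $\{\phi\le 0\}=K$, and $\partial\phi(x)\subseteq N_K(x)$ for $x\in\partial K$. Then $\mathcal P_m(K)=\{\Psi\ge 0\}$, the complement of the open convex set $\{\Psi<0\}$, and a minimizer of the convex functional $\ell_T$ over $\{\Psi\ge 0\}$ must satisfy $\Psi(Q^*)=0$ (it cannot lie in the non-empty region $\{\Psi<0\}$, nor, being non-constant, in $\{\Psi>0\}$). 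Choosing the optimal inner $t$ and translating it to $0$, the active indices $I^*$ are those with $q_i\in\partial K$. A first-order optimality analysis at the boundary point $Q^*$ — minimizing the convex $\ell_T$ against the convex obstacle $\{\Psi<0\}$ — yields $\tfrac1\mu\,g\in\partial\ell_T(Q^*)$ for some $\mu>0$ and some $g\in\partial\Psi(Q^*)$. Here the $i$-th block of $\partial\ell_T(Q^*)$ is $\{p_{i-1}-p_i\}$ as $p_{i-1}$ ranges over the subdifferential of $\|\cdot\|_T$ at $q_i-q_{i-1}$ and $p_i$ over that at $q_{i+1}-q_i$ (i.e.\ over momenta of the two incident fragments), while subdifferential calculus (for an infimal projection, then for a max of convex functions) presents $g\in\partial\Psi(Q^*)$ as having $i$-th block $\alpha_i w_i$ with $w_i\in\partial\phi(q_i)\subseteq N_K(q_i)$, $\alpha_i\ge 0$, $\sum_{i\in I^*}\alpha_i=1$ and $\sum_{i\in I^*}\alpha_i w_i=0$ (vanishing of the derivative in $t$), and — after a Carath\'eodory reduction — supported on at most $n+1$ indices. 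Matching blocks produces momenta $p_i$ of the fragments $q_i\to q_{i+1}$ with $p_i-p_{i-1}=-\tfrac{\alpha_i}{\mu}w_i\in-N_K(q_i)$ for every $i$ (trivially off the support of $g$): exactly the generalized reflection law~\ref{eq:generalizedreflection}. Hence $Q^*\in\mathcal Q_T(K)$, so $\xi_T(K)\le\ell_T(Q^*)=b_T(K)$, and the common value is attained at $m\le n+1$.

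\textbf{Main obstacle.} I expect the delicate part to be this last first-order analysis: justifying the optimality condition for a convex function minimized over the complement of a convex set, carrying out the subdifferential calculus for $\Psi$, and handling the identification $\partial\phi(q_i)\subseteq N_K(q_i)$ with full rigor; this is also where the smoothness hypothesis on $T$ enters, keeping the minimizing polygon and the local picture rigid, with the general (non-smooth) case recovered by approximation. By contrast the first inequality is a one-line convexity computation and the reduction step is Helly's theorem together with routine compactness and scaling.
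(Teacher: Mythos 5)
The paper itself does not prove Theorem~\ref{thm:bezdeks}: it is quoted from~\cite{akopyan2014elementary}, and Remark~\ref{rem:xidef} only explains how to pass from the smooth/classical case treated there to the stated formulation by Hausdorff approximation. So your attempt must be compared with the cited source rather than with an in-paper argument; measured against that, it is a correct reconstruction of essentially the same strategy. The easy inequality via $\sum_i(p_i-p_{i-1})=0$ paired against a hypothetical admissible translation $t$ is the standard Bezdek--Bezdek-type argument (the same ``blocking'' criterion the paper reuses in Lemma~\ref{lem:avoidtriangulation}); the reduction to $m\le n+1$ by Helly applied to the open sets $q_i-\inte K$, the compactness argument for a minimizer of $\ell_T$ over $\mathcal P_m(K)$, and the first-order analysis showing the minimizer obeys the generalized reflection law~\ref{eq:generalizedreflection} all match the cited proof in substance, with your convex-analytic packaging via $\Psi(Q)=\inf_t\max_i\phi(q_i-t)$ being a clean way to organize the KKT step. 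The step you flag as delicate does close: since $Q^*$ minimizes $\ell_T$ on $\{\Psi\ge 0\}$, the open convex sublevel set $\{\ell_T<\ell_T(Q^*)\}$ is contained in $\{\Psi<0\}$ and $Q^*$ lies on the boundary of both, so any (necessarily nonzero) subgradient of $\Psi$ at $Q^*$ lies in the normal cone of $\{\ell_T\le\ell_T(Q^*)\}$, which is the closed cone over the compact convex set $\partial\ell_T(Q^*)\not\ni 0$; this yields $\mu^{-1}g\in\partial\ell_T(Q^*)$ as you claim. Two small tidying points: after translating by the optimal $t^*$, vertices with $q_i\in\inte K$ receive $p_i=p_{i-1}$ and should be deleted (at least two boundary vertices survive, since otherwise all momenta coincide and $\ell_T(Q^*)=0$); and the Carath\'eodory reduction at the end is superfluous, as the bound $m\le n+1$ was already secured by Helly.
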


\begin{remark}
\label{rem:xidef}
Theorem~\ref{thm:bezdeks} was proved in~\cite{bezdek2009shortest} in the Euclidean case (when $T$ is the Euclidean ball). In~\cite{akopyan2016elementary} it was generalized for the case of smooth bodies $K$, $T$.
We obtain the formulation above by approximating non-smooth bodies by smooth ones in the Hausdorff metric and passing to the limit.
\end{remark}

\section{Permutohedron properties}
\label{sec:permutohedron}

In this section, we recollect briefly several equivalent definitions of a (regular) permutohedron. We also explain some of its properties that we will need later.

The classical definition is the following one:

\begin{definition}
\label{def:permut1}
The $n$-dimensional permutohedron is the convex hull of points
$$
(\sigma(1), \sigma(2), \ldots, \sigma(n+1)) \in \mathbb{R}^{n+1},
$$
over all permutations $\sigma : \{1,2,\ldots,n+1\} \to \{1,2,\ldots,n+1\}$.
\end{definition}

For the proof of Theorem~\ref{thm:viterboeq}, the following definition will be a convenient one (it can be found in~\cite[Lecture~7.3]{ziegler1995lectures}):

\begin{definition}
\label{def:permut2}
Consider the regular simplex $\triangle_n = \conv\{v_0, \ldots, v_n\} \subset \mathbb{R}^n$ centered at the origin and normalized so that its edges are all of unit (Euclidean) length. The permutohedron $P_n \subset \mathbb{R}^n$ is defined as the Minkowski sum of the simplex edges:
$$
P_n = \sum_{0\le i<j \le n} [v_i,v_j].
$$
\end{definition}

For the proof of Theorem~\ref{thm:viterbosimplex}, the following definition will be of use (it can be found in~\cite[Chapter~21]{conway2013sphere}):

\begin{definition}
\label{def:permut3}
The permutohedron $\widetilde{P_n}$ is defined as the Vorono\u{\i} cell of the lattice
$$
A_n^* = \left\{(x_0, \ldots, x_n) \in \mathbb{Z}^{n+1}: \ \sum\limits_{i} x_i = 0, \ x_0 \equiv \ldots \equiv x_n \pmod{n+1}\right\},
$$
lying in $\{(x_0, \ldots, x_n) \in \mathbb{R}^{n+1}: \ \sum\limits_{i} x_i = 0\} \cong \mathbb{R}^n$.
\end{definition}

\begin{remark}
\label{rem:lattice} Observe that the lattice $A_n^*$ is generated by the vectors
$$
a_1 = (\underbrace{-1, n, -1, \ldots, -1}_{n+1})^t, \ldots, a_{n} = (-1,-1,-1,\ldots, n)^t.
$$
This lattice also contains the vector
$$
a_0 = (\underbrace{n, -1, -1, \ldots, -1}_{n+1})^t = -a_1 - \ldots -a_n.
$$
Those vectors $a_0, a_1, \ldots, a_n$ are the only vectors of the shortest nonzero length in $A_n^*$, as one can check manually.
\end{remark}

All these definitions give the same result up to similarity. The width of $P_n$ equals $n \langle v_0-v_1, e_n\rangle = \sqrt{(n^2+n)/2}$, and the width of $\widetilde{P_n}$ equals $|a_0| = \sqrt{n^2+n}$ (the fact that those lengths are indeed widths will be justified below, in part (3) of Fact~\ref{fact:combifacet}). Comparing the widths, we see that $\widetilde{P_n}$ is $\sqrt{2}$ times larger than $P_n$.

\begin{fact}
\label{fact:volume}
In the notation of Definition~\ref{def:permut2},
$$
\volu P_n = \frac{(n+1)^{n-1/2}}{2^{n/2}}.
$$
\end{fact}

\begin{proof}

We have:
$$
 2^{n/2} \volu P_n = \volu \widetilde{P_n} = \det A_n^* = \det \Gamma(a_1, \ldots, a_n)^{1/2} =
 \begin{vmatrix}
  n^2 + n & -n-1 & -n-1 & \vdots \\
 -n-1 & n^2+n & -n-1 & \vdots\\
 -n-1 & -n-1 & n^2+n & \vdots \\
 \cdots & \cdots & \cdots & \ddots
 \end{vmatrix}^{1/2}.
$$
The latter $n\times n$ Gram matrix has an eigenvector $h = (1, \ldots, 1)^t$ corresponding to the eigenvalue $n+1$ without multiplicities. All the other eigenvectors, orthogonal to $h$, have the same $(n-1)$-fold eigenvalue $(n+1)^2$, as it is easy to check by hand. Therefore, $\det \Gamma(a_1, \ldots, a_n) = (n+1)^{2n-1}$, and the result follows.
\end{proof}

Now let us discuss the well-known combinatorial structure of permutohedron. The following fact is essentially a reformulation of~\cite[Proposition~2.6]{postnikov2009permutohedra}, where it is formulated in terms of Definition~\ref{def:permut1} and we translate it into the language of Definition~\ref{def:permut3}.

\begin{fact}
\label{fact:combipermut}
Let $\widetilde{P_n}$ be the Vorono\u{\i} cell of $A_n^*$ around the origin, as in Definition~\ref{def:permut3}. Fix numbers $y_i = \frac{n}{2} - i$, $0 \le i \le n$.
\begin{enumerate}
\item
The $d$-dimensional faces of $\widetilde{P_n}$ are in one-to-one correspondence with the ordered partitions of $\{0,1,\ldots,n\}$ into $n+1-d$ disjoint subsets. Given a decomposition $\{0,1,\ldots,n\} = B_0 \cup B_1 \cup \ldots \cup B_{n-d}$, the corresponding $d$-face of $\widetilde{P_n}$ consists of all points $(x_0, \ldots, x_n) \in \mathbb{R}^{n+1}$ satisfying for all $I \subseteq \{0,1,\ldots,n\}$
$$
\sum\limits_{i \in I} x_i \le \sum\limits_{j = 0}^{|I|-1} y_j,
$$
and such that the equality is attained (at least) for $I = B_0$, $I = B_0 \cup B_1$, $\ldots$, $I = B_0 \cup B_1 \cup \ldots \cup B_{n-d}$.

\item In particular, the vertices of $\widetilde{P_n}$ are in one-to-one correspondence with the ordered partitions into singletons. Having such a partition $B_0 \cup B_1 \cup \ldots \cup B_{n}$, associate to it the permutation $\pi$ of $\{0,1,\ldots,n\}$ given by $\pi(i) = j$ if $i \in B_j$. Given a permutation $\pi : \{0,1,\ldots,n\} \to \{0,1,\ldots,n\}$, let us write $v_\pi$ for the corresponding vertex. Then $v_\pi$ has coordinates $(y_{\pi(0)}, y_{\pi(1)}, \ldots, y_{\pi(n)})$. 

\item The face corresponding to a partition $B$ contains the face corresponding to a partition $B'$ if and only if $B'$ is a refinement of $B$.

\end{enumerate}
\end{fact}

We give more details regarding the facets of $\widetilde{P_n}$.

\begin{fact}
\label{fact:combifacet}
Let $\widetilde{P_n}$ be the Vorono\u{\i} cell of $A_n^*$ around the origin, as in Definition~\ref{def:permut3}.
\begin{enumerate}
\item The facets of $\widetilde{P_n}$ are in one-to-one correspondence with the ordered pairs of sets $(S, \{0,1,\ldots,n\}\setminus S)$, for $\varnothing \neq S \subsetneq \{0,1,\ldots,n\}$. Let us write $F_S$ for the facet corresponding to $(S, \{0,1,\ldots,n\}\setminus S)$.

\item For any $S$, an outward pointing normal for $F_S$ can be chosen from the lattice $A_n^*$. Explicitly, we can take
 $$
 a_S = \sum_{i \in S} a_i.
 $$
 as an outer normal to $F_S$. The distance from the origin to $F_S$ equals $\frac{1}{2} |a_S|$.

\item The closest to the origin facets are characterized as follows: they are congruent to an $(n-1)$-dimensional permutohedron, and the corresponding set $S$ is of cardinality $|S|=1$ or $|S|=n$. The width of $\widetilde{P_n}$ is $|a_0|$.

\item Let $v_\pi$ be a vertex of $\widetilde{P_n}$ lying in a facet $F_S$ (so that $\pi(S) = \{0, \ldots, |S|-1\}$). Then $v_\pi-a_S$ is another vertex of $\widetilde{P_n}$, and its corresponding permutation $\pi'$ is obtained from $\pi$ by the left cyclic $|S|$-shift of values (so that $\pi(S) = \{n - |S| + 1, \ldots, n\}$). That is, $\pi'(i) = \pi(i) - |S| \pmod{n+1}$.


\end{enumerate}
\end{fact}

\begin{proof}
\begin{enumerate}
\item Follows from Fact~\ref{fact:combipermut}, part (1).
\item It follows from the definition of the Vorono\u{\i} tessellation that for any facet $F_S$ of $\widetilde{P_n}$, there is an adjacent Vorono\u{\i} cell touching $\widetilde{P_n}$ by $F_S$. The vector from the origin to the center of this cell is orthogonal to $F_S$ and is cut by $F_S$ into equal halves. This vector belongs to the lattice and can be taken as an outer normal for $F_S$. We want to show that it coincides with $a_S = \sum\limits_{i \in S} a_i$. On one hand, immediately from Fact~\ref{fact:combipermut} we get that all the vectors $a_i-a_j$ where either $i,j\in S$ or $i,j \notin S$, are parallel to $F_S$. Among those $a_i-a_j$, there are $(n-1)$ linearly independent vectors, so they span the tangent hyperplane for $F_S$. Also they are all orthogonal to $\sum_{i \in S} a_i$ (by a direct computation, using $\langle a_i, a_j\rangle = - n - 1 + (n+1)^2\delta_{ij}$). On the other hand, the vector $a_S = \sum\limits_{i \in S} a_i$ is primitive in the sense that $a_S \neq r\lambda$ for $r > 1$, $\lambda \in A_n^*$. The result now follows.
\item The distance from the origin to a facet $F_S$ equals $\frac12 |a_S|$. So the statement follows now from Remark~\ref{rem:lattice}: the shortest nonzero vectors of $A_n^*$ are precisely $a_0, a_1, \ldots, a_n$.
\item Let $v_\pi \in F_S$. The coordinates of $v_\pi$ are $(y_{\pi(0)}, y_{\pi(1)}, \ldots, y_{\pi(n)})$. Then the $i$-th coordinate ($0 \le i \le n$) of $v_\pi - a_S$ is
$$
(v_\pi - a_S)_i =
\begin{cases}
y_{\pi(i)} - n + |S|-1, & i \in S, \\
y_{\pi(i)} + |S|, & i \notin S.
\end{cases}
$$
These are precisely the coordinates of $v_{\pi'}$, where $\pi'(i) = \pi(i) - |S| \pmod{n+1}$.

\end{enumerate}
\end{proof}

Now we use this combinatorial description in order to establish a lemma that we will need in Section~\ref{sec:inequality}.

Consider the vectors $u_0, \ldots, u_n$ such that $|u_0| = \ldots = |u_n| = 1$ and the directions of the $u_i$ are equiangular, that is, the endpoints of $u_0, \ldots, u_n$ form a regular simplex in $\mathbb{R}^n$. Consider the lattice $\Lambda$ generated by the vectors $u_1, \ldots, u_n$. Since $u_0 = -\sum\limits_{i=1}^n u_i$, we have $u_0 \in \Lambda$. Let $P$ be the Vorono\u{\i} cell of $\Lambda$ around the origin. Observe that $\Lambda$ is just a scaled copy of $A_n^*$, so comparing $|u_0| = 1$ with $|a_0| = \sqrt{n^2+n}$ we obtain that $P$ is congruent to $\frac{1}{\sqrt{n^2+n}} \widetilde{P_n}$. We keep all the notation (like $v_\pi$, $F_S$) of Facts~\ref{fact:combipermut},~\ref{fact:combifacet} in the context of $P$.

Consider the tiling $T$ of $\mathbb{R}^n$ that is dual to the Vorono\u{\i} tessellation of $\mathbb{R}^n$ with respect to $\Lambda$. It is also known as the Delaunay tiling (see, e.g.,~\cite[Chapter~32]{gruber2007convex}). Let us recall its construction. The vertices of $T$ are precisely the elements of $\Lambda$. Vertices $\lambda_0, \lambda_1, \ldots, \lambda_d \in \Lambda$, $0 \le d \le n$, form a $d$-face of $T$ if the Vorono\u{\i} cells centered at the $\lambda_i$ have a common $(n-d)$-face.
It turns out that $T$ is in fact a triangulation (i.e., the corresponding polyhedral complex is simplicial). This might not be clear a priori, so we prove it as a part of the following lemma.



\begin{lemma}
\label{lem:equiedge}
Every full-dimensional cell of $T$ is a simplex. Every simplex $\sigma$ of $T$ has the following property: There exist a closed oriented polygonal line $Q_\sigma$ such that
\begin{itemize}
\item $Q_\sigma$ traverses along edges of $\sigma$ and visits every vertex of $\sigma$ once; in particular, $\conv Q_\sigma = \sigma$;
\item the segments of $Q_\sigma$ have unit length and the set of their directions coincides with $\{v_0, \ldots, v_n\}$.
\end{itemize}

\end{lemma}

\begin{proof}
Let $\sigma$ be a full-dimensional cell of $T$ corresponding to a vertex $v$ of the permutohedral tessellation. First, we want to prove that $v$ is adjacent to $n+1$ permutohedra. Let $P'$ and $P''$ be two of them sharing a common facet $F$, and let $\lambda', \lambda'' \in \Lambda$ be their centers. Note that $v - \lambda'$ is a vertex of $P$, the Vorono\u{\i} cell around the origin. We indexed the vertices of $P$ by permutations in Fact~\ref{fact:combipermut}. Let $\pi'$ be the relevant permutation. Similarly we introduce $\pi''$. How are $\pi'$ and $\pi''$ related? Firstly, they are different, because $v_{\pi'} = v - \lambda' \neq v - \lambda'' = v_{\pi''}$. Secondly, if $F - \lambda' = F_{S'}$, then $\lambda''-\lambda'$ is the outer normal for the face $F_{S'}$ of $P$ constructed in the proof of Fact~\ref{fact:combifacet}, part (2). We know that $v_{\pi'} - (\lambda''-\lambda') = v_{\pi''}$, and using Fact~\ref{fact:combifacet}, part (4), we conclude that $\pi''$ is obtained from $\pi'$ by a multiple left cyclic shift of values. Therefore, all the permutohedra adjacent to $v$ correspond to different cyclic shifts of a single permutation of $\{0,1,\ldots,n\}$. Hence, there are $n+1$ of them, and $\sigma$ is a simplex.

Now we construct $Q_\sigma$, for a simplex $\sigma$ of the Delaunay triangulation $T$. Let $v$ be the vertex of the permutohedral tessellation corresponding to $\sigma$. Let $\lambda \in \Lambda$ be a vertex of $\sigma$. We would like to show that among the edges of $\sigma$ incident to $\lambda$, there are precisely two of length 1, and one of them is pointed along a vector $u_i$ while the other is pointed against a vector $u_j$, for some $i\neq j$. Let $\pi$ be the permutation of $\{0,1,\ldots,n\}$ corresponding to the vertex $v - \lambda$ of $P$. Recall from Fact~\ref{fact:combifacet}, part (3), that there are exactly two facets of $P$ containing $v_\pi$ and congruent to a $(n-1)$-dimensional permutohedron. They correspond to the pairs $(\{\pi(0)\}, \{\pi(1), \ldots, \pi(n)\})$ and $(\{\pi(0), \ldots, \pi(n-1)\}, \{\pi(n)\})$. These two facets correspond to the shortest edges of $\sigma$ pointed along, say, $v_i$ and $-v_j$. From Fact~\ref{fact:combifacet}, part (4), we get that the vertices $v_\pi - u_i$ and $v_\pi + u_j$ of $P$ correspond to the left and right cyclic shifts of $\pi$. Let us mark all those (oriented along the $u_0, \ldots, u_n$) edges of $\sigma$ that we found, over all vertices $\lambda$ of $\sigma$. They form a family of oriented cycles passing through each vertex $\lambda \in \sigma$ once. We claim that this family is in fact a single cycle. To see that, we keep track of the permutations $\pi$ corresponding to the current vertex of $\sigma$. As we move along a cycle from the family, these permutations shift cyclically, as described above. Thus, the cycle consists of $(n+1)$ segments, and we found $Q_\sigma$ as desired. The lemma is proven.
\end{proof}

\section{Equality cases in Viterbo's conjecture}
\label{sec:equality}

Consider the regular simplex $\triangle_n = \conv\{v_0, \ldots, v_n\} \subset \mathbb{R}^n$ centered at the origin and normalized so that its edges are all of unit (Euclidean) length. Choose an orthonormal base $(e_1, \ldots, e_n)$ with $e_n$ pointing to $v_0$. Consider also the permutohedron $P_n \subset \mathbb{R}^n$, as in Definition~\ref{def:permut2}:
$$
P_n = \sum_{0\le i<j \le n} [v_i,v_j].
$$

The main result of this section if the following

\begin{theorem}
\label{thm:viterboeq} In the configuration $P_n \times \triangle_n^\circ$ the shortest generalized billiard trajectory has length $\xi_{\triangle_n^\circ}(P_n) = (n+1)^2$. Moreover, $X = P_n \times \triangle_n^\circ$ delivers equality in Viterbo's conjecture (where $P_n$ and $\triangle_n^\circ$ lie in Lagrangian subspaces).
\end{theorem}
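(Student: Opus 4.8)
The plan is to compute $\xi_{\triangle_n^\circ}(P_n)$ explicitly and then verify Viterbo's equality by a direct volume computation. First I would set up coordinates: the regular simplex $\triangle_n$ with unit edges embeds naturally as $\conv\{f_0,\dots,f_n\}$ in the hyperplane $\{x_1+\dots+x_{n+1}=\text{const}\}\subset\mathbb R^{n+1}$, where the $f_i$ are suitably scaled standard basis vectors, and the permutohedron $P_n=\sum_{i<j}[f_i,f_j]$ is then the orbit polytope of a point under coordinate permutations; its vertices are the $(n+1)!$ permutations of a fixed vector. In these coordinates the facets of $P_n$ are indexed by proper nonempty subsets $S\subset\{0,\dots,n\}$, with outer normal proportional to $\sum_{i\in S}e_i$ (restricted to the simplex hyperplane); the facet normals of $P_n$ are thus parallel to the edge directions $f_i-f_j$ of $\triangle_n$ only in a dual sense, which is exactly the compatibility that makes $P_n\times\triangle_n^\circ$ special. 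The norm $\|\cdot\|_{\triangle_n^\circ}$ has unit ball $\triangle_n$, so $\|q\|_{\triangle_n^\circ}=\max_i\langle v_i^*,q\rangle$ for an appropriate labeling, and lengths of segments are piecewise linear in these functionals.

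Next I would produce an explicit closed billiard trajectory of length $(n+1)^2$ and show it is optimal. The natural candidate is a trajectory with $n+1$ bounce points, one on each "pair" of opposite structures, cycling through the vertices/facets of $\triangle_n$ and $P_n$ in a symmetric fashion — concretely, a trajectory whose momenta run through the $n+1$ vertices of $\triangle_n^\circ$ (the extreme points of the norm's dual) and whose vertices $q_k$ lie on the $n+1$ "coordinate-hyperplane" facets of $P_n$. One checks the generalized reflection law \eqref{eq:generalizedreflection} at each bounce using the explicit normal cones of $P_n$, and computes $\ell_{\triangle_n^\circ}=\sum\|q_{k+1}-q_k\|_{\triangle_n^\circ}$; by the symmetry of the configuration each of the $n+1$ legs should contribute $n+1$, giving $(n+1)^2$. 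For the lower bound I would invoke Theorem~\ref{thm:bezdeks}: it suffices to show every configuration $Q=(q_1,\dots,q_m)\in\mathcal P_m(P_n)$ has $\ell_{\triangle_n^\circ}(Q)\ge(n+1)^2$. The standard tool here (from the Bezdek--Bezdek / Akopyan--Karasev circle of ideas) is that $Q\in\mathcal P_m(P_n)$ iff the origin lies in the convex hull of the "momentum vectors" $p_k\in\partial\triangle_n^\circ$ with appropriate multiplicities, which translates the length bound into a question about how much total $\triangle_n^\circ$-length is forced when $P_n$ cannot contain a translate of the polygon; a width/support-function estimate using the facet structure of $P_n$ closes it, with equality exactly for the symmetric trajectory above.

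Finally, for the Viterbo equality I would compute both sides. Since $c_{HZ}(P_n\times\triangle_n^\circ)=\xi_{\triangle_n^\circ}(P_n)=(n+1)^2$, the right side is $(n+1)^{2n}/n!$. For the left side I need $\volu(P_n)\cdot\volu(\triangle_n^\circ)$. The volume of the regular simplex with unit edges and of its polar are classical (or can be gotten from the embedding above), and $\volu(P_n)$ equals $(n+1)^{n-1}$ times a normalizing constant — indeed the permutohedron's normalized volume is well known to be $(n+1)^{n-1}$ (counting lattice points / spanning trees of $K_{n+1}$), and one must track the metric normalization coming from "unit edges." Assembling $\volu(P_n)\volu(\triangle_n^\circ)=(n+1)^{2n}/n!$ is then a bookkeeping check. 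I expect the main obstacle to be the lower bound $\xi_{\triangle_n^\circ}(P_n)\ge(n+1)^2$: exhibiting the extremal trajectory and its length is mechanical, but ruling out all shorter configurations $Q\in\mathcal P_m(P_n)$ for $2\le m\le n+1$ requires the right duality reformulation and a clean width estimate exploiting that the facet normals of $P_n$ are precisely the "partial sums" directions dual to $\triangle_n$'s edges; getting that estimate sharp (rather than off by a constant) is the crux.
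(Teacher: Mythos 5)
Your overall architecture matches the paper's: compute $c_{HZ}(P_n\times\triangle_n^\circ)=\xi_{\triangle_n^\circ}(P_n)=(n+1)^2$ via the Bezdek--Bezdek characterization, then verify the volume identity $\volu(P_n)\volu(\triangle_n^\circ)=(n+1)^{2n}/n!$ (the paper gets $\volu P_n$ from the Voronoi-cell description of the permutohedron for the lattice $A_n^*$ and a Gram determinant, and $\volu\triangle_n^\circ$ from the known Mahler product of the simplex; your route through the normalized volume $(n+1)^{n-1}$ is equivalent bookkeeping). For the upper bound the paper simply uses the $2$-periodic width-bouncing trajectory rather than your $(n+1)$-bounce candidate; either is fine, since one only needs some element of $\mathcal P_m(P_n)$ of length $(n+1)^2$.

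The genuine gap is exactly where you flag it: the lower bound $\ell_{\triangle_n^\circ}(Q)\ge(n+1)^2$ for every $Q\in\mathcal P_m(P_n)$. Your proposed route --- reformulate membership in $\mathcal P_m(P_n)$ via momenta whose convex hull contains the origin, then close with ``a width/support-function estimate'' --- is not a proof and, more importantly, is not the mechanism that makes this configuration work; a generic width or blocking estimate gives bounds that are not sharp here. The paper's argument has three ingredients you are missing. First, since $\|\cdot\|_{\triangle_n^\circ}$ is linear on each cone $C_i=\cone\{v_0,\dots,v_n\}\setminus\{v_i\}$, every segment of $Q$ can be replaced by a polygonal path of the \emph{same} length whose edges point along the $v_i$; so one may assume all edges of $Q$ are parallel to $v_0,\dots,v_n$. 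Second, because $\sum_i v_i=0$ is the unique linear dependence among these directions and $Q$ is closed, the total length travelled in each direction $v_i$ is the same number, say $c$. Third --- and this is the crux --- Lemma~\ref{lem:fitinto}: a closed polygon with edges along the $v_i$ and per-direction total length $n+1$ fits into a translate of $P_n$; this is proved by induction on $n$ via a Minkowski-sum covering using the top facet $F\cong P_{n-1}$. Hence $c<n+1$ would contradict $Q\in\mathcal P_m(P_n)$, giving $\ell(Q)=(n+1)c\ge(n+1)^2$. Without an argument of this kind (or a positive answer to Question~\ref{ques:permutball}), your proposal does not establish the capacity computation, and therefore not the equality case either.
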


This theorem can be viewed as the extension of the first part of the following proposition, proved in~\cite{balitskiy2016shortest}:

\begin{proposition}
\label{prop:triangle}
\begin{enumerate}
\item
In the configuration $P_2 \times \triangle_2^{\circ}$ the shortest generalized billiard trajectory has length
$$
\xi_{\triangle_2^{\circ}}(P_2) = 9.
$$
Hence, $X = P_2 \times \triangle_2^{\circ}$ delivers equality in Viterbo's conjecture.

\item
Any classical billiard trajectory in the configuration
$$
P_2 \times \triangle_2^{\circ}
$$
bounces $4$ times and has length 9.

\item
Arbitrarily close to any point
$$
(q,p) \in \partial(P_2 \times \triangle_2^{\circ})
$$
there passes a certain classical billiard trajectory of minimal length.
\end{enumerate}
\end{proposition}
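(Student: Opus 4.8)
The plan is to reduce the whole statement to the single identity $\xi_{\triangle_n^\circ}(P_n)=(n+1)^2$ and then prove two matching bounds. First I would dispose of the volumes. In the normalisation in which $\triangle_n$ has unit edges, $\volu(\triangle_n)=\sqrt{n+1}/(n!\,2^{n/2})$; the polar of a centred regular simplex is a reflected homothet of it, with ratio read off from the in- and circumradii, so $\triangle_n^\circ=-2(n+1)\triangle_n$ and $\volu(\triangle_n^\circ)=2^{n/2}(n+1)^n\sqrt{n+1}/n!$; and $\volu(P_n)=(n+1)^{n-1}\sqrt{n+1}\,2^{-n/2}$ by the classical volume formula for the permutohedron (using that $P_n$ is isometric to $\tfrac1{\sqrt2}$ times the standard permutohedron). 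Multiplying gives $\volu(P_n\times\triangle_n^\circ)=(n+1)^{2n}/n!$. Since $c_{HZ}(P_n\times\triangle_n^\circ)=\xi_{\triangle_n^\circ}(P_n)$, the body $P_n\times\triangle_n^\circ$ realises equality in Viterbo's conjecture \emph{if and only if} $\xi_{\triangle_n^\circ}(P_n)=(n+1)^2$, and one must prove both the inequality $\xi_{\triangle_n^\circ}(P_n)\le(n+1)^2$ (which is Viterbo's conjecture for this body) and the reverse one.

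For the billiard computation I would use the realisation $v_i=\tfrac1{\sqrt2}(e_i-\tfrac1{n+1}\mathbf 1)$, $i=0,\dots,n$, of $\triangle_n$ inside the hyperplane $\sum_k x_k=0$ of $\mathbb R^{n+1}$; the normal fan of $P_n$ is then the braid ($A_n$) fan, with rays $u_S:=\mathbf 1_S-\tfrac{|S|}{n+1}\mathbf 1$ for $\emptyset\ne S\subsetneq\{0,\dots,n\}$ and $u_{S^c}=-u_S$. Two elementary computations drive everything (write $h_A$ for the support function): $h_{P_n}(u_S)=\tfrac{|S|(n+1-|S|)}{2\sqrt2}$, while $h_{\triangle_n}(u_S)+h_{\triangle_n}(-u_S)=\tfrac1{\sqrt2}$ — the width of $\triangle_n$ is the \emph{same} in every direction $u_S$ — so $h_{\triangle_n^\circ}(u_S)+h_{\triangle_n^\circ}(-u_S)=2(n+1)\cdot\tfrac1{\sqrt2}=\sqrt2(n+1)$. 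The upper bound is then immediate: fix a singleton, say $S=\{0\}$, and put $p=\tfrac{n+1}{2\sqrt2}\,u_{\{0\}}$; since $\langle u_{\{0\}},p\rangle=\tfrac{n+1}{2\sqrt2}|u_{\{0\}}|^2=\tfrac n{2\sqrt2}=h_{P_n}(u_{\{0\}})$, the point $p$ lies on a supporting hyperplane of $P_n$, hence $p\notin\inte P_n$. As $P_n$ is centrally symmetric, $\{p,-p\}$ fits into a translate of $\inte P_n$ exactly when $p\in\inte P_n$; therefore $\{p,-p\}\in\mathcal P_2(P_n)$, and by Theorem~\ref{thm:bezdeks},
$$
\xi_{\triangle_n^\circ}(P_n)\le\|2p\|_{\triangle_n^\circ}+\|{-2p}\|_{\triangle_n^\circ}=h_{\triangle_n^\circ}(2p)+h_{\triangle_n^\circ}(-2p)=\tfrac{n+1}{\sqrt2}\cdot\sqrt2(n+1)=(n+1)^2 .
$$
Geometrically this is the back-and-forth trajectory along the diameter of $P_n$ orthogonal to the opposite facets $F_{\{0\}}$ and $F_{\{0\}^c}$; one can check it is a genuine generalized billiard trajectory, though this is not needed.

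For the lower bound let $Q$ be any closed billiard trajectory; by Theorem~\ref{thm:bezdeks} and Remark~\ref{rem:xidef} we may take $Q\in\mathcal P_m(P_n)$ with $m\le n+1$. The Helly-type argument underlying Theorem~\ref{thm:bezdeks} supplies at most $n+1$ of its vertices $p_1,\dots,p_k$ (cyclically ordered along $Q$), facet normals $u_{S_1},\dots,u_{S_k}$, and weights $\lambda_1,\dots,\lambda_k>0$ with $\sum_j\lambda_j u_{S_j}=0$ and $\langle u_{S_j},p_j\rangle\ge h_{P_n}(u_{S_j})$; rescaling the $\lambda_j$ by a common positive factor, normalise so that the closed partial-sum path $W_j:=\sum_{l\le j}\lambda_l u_{S_l}$ ($W_0=W_k=0$) just fits, after a translation, inside $-\triangle_n^\circ=2(n+1)\triangle_n$. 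Then there is $w_0$ with $w_j:=w_0-W_j\in\triangle_n^\circ$ for all $j$; since $w_{j-1}-w_j=\lambda_j u_{S_j}$, telescoping $\sum_j\langle w_j,p_{j+1}-p_j\rangle$ and using $\langle w_j,p_{j+1}-p_j\rangle\le\|p_{j+1}-p_j\|_{\triangle_n^\circ}$ together with the subadditivity of $\|\cdot\|_{\triangle_n^\circ}$ gives
$$
\ell_{\triangle_n^\circ}(Q)\ \ge\ \sum_j\|p_{j+1}-p_j\|_{\triangle_n^\circ}\ \ge\ \sum_j\langle w_j,p_{j+1}-p_j\rangle\ =\ \sum_j\lambda_j\langle u_{S_j},p_j\rangle\ \ge\ \sum_j\lambda_j\,h_{P_n}(u_{S_j}).
$$
When $k=2$ one is forced to have $S_2=S_1^c$, $\lambda_1=\lambda_2$, the path $\{W_j\}$ is a single segment parallel to $u_{S_1}$, and the width identities above make the right-hand side equal \emph{exactly} $(n+1)^2$ — for every $S_1$, so there is a whole family of extremal two-bounce trajectories, matching the upper bound. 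The remaining and principal task is to verify $\sum_j\lambda_j h_{P_n}(u_{S_j})\ge(n+1)^2$ for all admissible data with $k\ge3$ as well. Here I would exploit the $S_{n+1}$-symmetry and the fact (which the computations above make transparent) that $h_{P_n}(u_S)$ equals the Minkowski gauge of $\triangle_n-\triangle_n$ at $u_S$ up to the universal constant $\tfrac{n+1}{4}$: this should recast the required inequality as the assertion that a certain function of the weights placed on the rays of the braid fan is minimised at the two-ray (``diameter'') configurations, which I would then prove by a convexity argument combined with an induction on the sizes $|S_j|$. I expect this $k\ge3$ estimate — disentangling the partial-sum path $\{W_j\}$ from the simplex norm when genuinely many facets of the permutohedron are involved at once — to be the hard part of the proof. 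Combining the two bounds gives $\xi_{\triangle_n^\circ}(P_n)=(n+1)^2$, and hence, by the volume computation, equality in Viterbo's conjecture for $P_n\times\triangle_n^\circ$.
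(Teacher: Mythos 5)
There is a genuine gap, and you name it yourself: the lower bound $\xi_{\triangle_n^\circ}(P_n)\ge(n+1)^2$ is only established for the two-bounce configurations ($k=2$), while for $k\ge 3$ you merely state that you \emph{would} prove $\sum_j\lambda_j h_{P_n}(u_{S_j})\ge(n+1)^2$ by ``a convexity argument combined with an induction.'' That estimate is the entire content of the statement --- the volume computation and the two-bounce upper bound are routine --- so the proof is not complete. For comparison, the paper avoids this dual optimization over the braid fan altogether: it works in the primal with the Bezdek--Bezdek characterization, replaces every segment of a candidate closed polygonal line by a staircase path along the directions $v_0,\dots,v_n$ (legitimate because the norm with unit ball $\triangle_n$ is linear on each cone spanned by $n$ of the $v_i$), observes that the unique relation $\sum_i v_i=0$ forces the total length travelled along each direction to be the same, and then proves by induction on $n$ a covering lemma (Lemma~\ref{lem:fitinto}): a closed polygonal line using only these directions, with total length $n+1$ in each, fits into a translate of $P_n$. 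Your plan to minimize $\sum_j\lambda_j h_{P_n}(u_{S_j})$ subject to the closed momentum path fitting into $\triangle_n^\circ$ is precisely the hard combinatorial problem that this covering lemma circumvents; the feasible set is a union over selections of facets rather than a single convex program, so I see no reason a soft convexity argument would show the minimum is attained at the two-ray configurations, and as written the argument does not go through.

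Separately, your proposal addresses only part (1) of the proposition. Parts (2) and (3) --- that \emph{every} classical trajectory in $P_2\times\triangle_2^\circ$ bounces exactly $4$ times and has length $9$, and that minimal classical trajectories pass arbitrarily close to every point of $\partial(P_2\times\triangle_2^\circ)$ --- require a classification of all classical trajectories, not just a computation of the shortest generalized one, and nothing in your argument touches them. (Note that the paper itself does not reprove this proposition: it cites~\cite{balitskiy2016shortest} for it, and proves the general-$n$ analogue of part (1) as Theorem~\ref{thm:viterboeq}.) The pieces you do carry out --- the polarity $\triangle_n^\circ=-2(n+1)\triangle_n$, the volumes, the constancy of the width of $\triangle_n$ along the rays $u_S$, and the two-bounce upper bound --- are correct and consistent with Propositions~\ref{prop:volume} and~\ref{prop:capacity}.
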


Another statement of the same spirit, proved in~\cite{balitskiy2016shortest} is the following

\begin{proposition}
\label{prop:hanner}
\begin{enumerate}
\item
In a Hanner polytope $H \subset \mathbb{R}^n$ with geometry specified by its polar $H^{\circ}$, the shortest generalized billiard trajectory has length $\xi_{H^\circ}(H) = 4$, and $X = H\times H^\circ$ delivers equality in Viterbo's conjecture (the latter fact has been known since~\cite{artstein2014from}).

\item
Any classical billiard trajectory in a Hanner polytope $H \subset \mathbb{R}^n$ with geometry specified by its polar $H^{\circ}$ is $2n$-bouncing and has length 4.

\item
Moreover, in an arbitrarily small neighborhood of any point $(q,p) \in \partial (H \times H^{\circ})$, there passes a classical billiard (in the configuration $H \times H^{\circ}$) trajectory of minimal length.
\end{enumerate}
\end{proposition}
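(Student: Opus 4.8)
\emph{Setup and reductions.}
The plan is to induct on $n=\dim H$, exploiting the recursive structure of Hanner polytopes: $H$ is either a segment, or a product $H=H_1\times H_2$, or a free sum $H=\conv(H_1\cup H_2)$, where $H_1,H_2$ are lower-dimensional Hanner polytopes spanning complementary coordinate subspaces, each containing the origin. Polarity interchanges the last two operations, $(H_1\times H_2)^\circ=\conv(H_1^\circ\cup H_2^\circ)$ and $(\conv(H_1\cup H_2))^\circ=H_1^\circ\times H_2^\circ$; writing $\|\cdot\|_{H_j^\circ}$ for the induced norm on the $j$-th subspace, one has $\|(q_1,q_2)\|_{H^\circ}=\max(\|q_1\|_{H_1^\circ},\|q_2\|_{H_2^\circ})$ when $H=H_1\times H_2$, and $\|(q_1,q_2)\|_{H^\circ}=\|q_1\|_{H_1^\circ}+\|q_2\|_{H_2^\circ}$ when $H=\conv(H_1\cup H_2)$. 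Since the linear symplectomorphism $(q,p)\mapsto(p,-q)$ identifies $K\times T$ with $T\times K$, we have $c_{HZ}(K\times T)=c_{HZ}(T\times K)$, i.e.\ $\xi_T(K)=\xi_K(T)$; combining this with polarity, in the inductive step we may always assume $H=H_1\times H_2$ is a product (in the free-sum case, apply the product analysis to $H^\circ$ and transport back along the swap, which preserves all the structure involved). The base case $H=[-1,1]$ is immediate: $\mathcal P_m([-1,1])$ consists of the point sets of diameter $\ge 2$; the only simple classical closed trajectory is $1\to-1\to1$, of length $4$ with $2=2n$ bounces; and, read as a curve in $\partial([-1,1]^2)$, it sweeps the whole boundary, which settles the third assertion in dimension one.

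\emph{The first assertion.}
Write $q_i=(a_i,b_i)$ for a trajectory $Q=(q_1,\dots,q_m)$ in $H=H_1\times H_2$. For the upper bound, place a shortest closed trajectory of $H_1$ (of length $\xi_{H_1^\circ}(H_1)=4$, by induction) into the slice $H_1\times\{0\}$: a point set $\{(a_i,0)\}$ fails to fit into a translate of $\inte H$ precisely when $\{a_i\}$ fails to fit into a translate of $\inte H_1$, and $\|(d,0)\|_{H^\circ}=\|d\|_{H_1^\circ}$, so by Theorem~\ref{thm:bezdeks} this yields an admissible configuration of length $4$ and $\xi_{H^\circ}(H)\le 4$. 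For the lower bound, if $Q\in\mathcal P_m(H)$ then $\{a_i\}$ and $\{b_i\}$ cannot simultaneously fit into translates of $\inte H_1$ and $\inte H_2$ (their product would then contain $\{q_i\}$), so, say, $(a_1,\dots,a_m)\in\mathcal P_m(H_1)$; then Theorem~\ref{thm:bezdeks} and the inductive hypothesis give $\ell_{H^\circ}(Q)=\sum_i\max(\|a_{i+1}-a_i\|_{H_1^\circ},\|b_{i+1}-b_i\|_{H_2^\circ})\ge\sum_i\|a_{i+1}-a_i\|_{H_1^\circ}\ge\xi_{H_1^\circ}(H_1)=4$. Hence $\xi_{H^\circ}(H)=4$. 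The Viterbo equality then follows from $c_{HZ}(H\times H^\circ)=\xi_{H^\circ}(H)=4$ together with $\volu(H\times H^\circ)=\volu(H)\volu(H^\circ)=4^n/n!$, the last equality being the Hanner identity (provable by the same induction from $\volu(\conv(A\cup B))=\tfrac{n_1!\,n_2!}{n!}\volu(A)\volu(B)$); this equality case was already noted in~\cite{artstein2014from}.

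\emph{The second and third assertions.}
In the product case a classical trajectory moves, on each segment, in the direction of some vertex $(v^{(1)},v^{(2)})$ of $H$ — its momentum being a smooth point of $\partial H^\circ$ forces the segment direction into a one-dimensional normal cone of $H^\circ$, i.e.\ along a vertex of $H$ — and it bounces off facet-interiors of $H$, each of which has the form $(\rint G)\times\inte H_2$ for a facet $G$ of $H_1$ (``type $1$'') or $\inte H_1\times(\rint G')$ (``type $2$''). Projecting $Q$ to the two factors and merging consecutive collinear segments produces closed trajectories $Q_1$ in $H_1$ and $Q_2$ in $H_2$; the type-$j$ bounces of $Q$ are exactly the bounces of $Q_j$, and checking the facet-interior and reflection-law conditions factor by factor shows that $Q_1$ and $Q_2$ are again classical. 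By induction $Q_j$ is $2n_j$-bouncing of length $4$; since $\|(v^{(1)},v^{(2)})\|_{H^\circ}=\max(\|v^{(1)}\|_{H_1^\circ},\|v^{(2)}\|_{H_2^\circ})=1$ for a vertex, $\ell_{H^\circ}(Q)$ coincides with the common length of the two projections, so $\ell_{H^\circ}(Q)=4$ and $Q$ has $2n_1+2n_2=2n$ bounces. For the third assertion one runs this correspondence in reverse: a classical trajectory in $H_1\times H_2$ is built from classical trajectories $Q_1$ in $H_1$ and $Q_2$ in $H_2$, a global speed ratio $t\in(0,1)$ governing how each momentum of $Q$ splits as $((1-t)\tilde p^{(1)},t\tilde p^{(2)})$, and an arbitrary interleaving of the cyclic direction patterns of $Q_1$ and $Q_2$, with the segment lengths of $Q$ taken to be any solution of the resulting transportation problem whose marginals are the segment lengths of $Q_1$ and $Q_2$. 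Given a target $(q,p)\in\partial(H\times H^\circ)$, decompose $p=(p^{(1)},p^{(2)})$, set $t=\|p^{(2)}\|_{H_2}$ when $p\in\partial H^\circ$ (and perturb in the degenerate cases and in the case $q\in\partial H$), use the inductive third assertion to pick $Q_1,Q_2$ passing near the corresponding boundary points of $H_1\times H_1^\circ$ and $H_2\times H_2^\circ$, and arrange the interleaving so that the relevant segments of $Q_1$ and $Q_2$ are simultaneously active; this produces a length-$4$ classical trajectory through a point arbitrarily close to $(q,p)$.

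\emph{Main difficulty.}
The second and third assertions carry all the weight; the first is short once the reduction to products is set up. The delicate points are: checking that the property of being classical survives the projection/merging operation and its inverse, which needs a careful analysis of the facet and vertex structure of products and free sums of polytopes; and, in the third assertion, organizing the interleaving — the cyclic pattern, the speed ratio $t$, and a feasible family of segment lengths — so that the reconstructed closed trajectory really does pass near the prescribed $(q,p)$, including the subcase where $(q,p)$ lies over $\partial H$ rather than over $\partial H^\circ$.
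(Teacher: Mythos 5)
The paper offers no proof of Proposition~\ref{prop:hanner}: it is quoted verbatim from~\cite{balitskiy2016shortest}, so there is nothing in this text to compare your argument against. Taken on its own terms, your inductive framework on the recursive structure of Hanner polytopes (segment / $\times$ / free sum, with polarity and the swap $\xi_T(K)=\xi_K(T)$ reducing free sums to products) is the natural strategy, and Part~1 is carried out correctly: the upper bound by embedding a shortest $H_1$-trajectory into $H_1\times\{0\}$, the lower bound via $\max\ge$ one coordinate, and the volume identity $\volu(H)\volu(H^\circ)=4^n/n!$ are all sound.

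Parts~2 and~3, however, are left with real gaps at precisely the steps you flag as delicate. In Part~2 you invoke the inductive hypothesis on the projections $Q_1,Q_2$ to conclude they are ``$2n_j$-bouncing of length $4$,'' but the inductive statement only applies to \emph{simple} classical trajectories, and you never argue that the projection of a simple trajectory remains simple. (This is fixable: if both $Q_1$ and $Q_2$ were proper multiple covers, periodicity would pass back to $Q$ via the common parametrization, so at least one projection is simple; its length is then $4$, forcing the other to have length $4$ and hence also be simple. But as written the step is absent, and it is the step on which the whole count $2n_1+2n_2=2n$ rests.) Also missing is the verification that the decomposition $p=((1-t)\tilde p_1,t\tilde p_2)$ has a single global $t$ along the trajectory and that the projected reflection law holds in each factor; both are true but require the facet/normal-cone computation for free sums that you only allude to. In Part~3 the reconstruction from $(Q_1,Q_2,t,\text{interleaving},\text{segment lengths})$ is described but not shown to be well posed: you do not establish that a feasible interleaving and a solution of the ``transportation problem'' actually exist for every choice of $Q_1,Q_2$, nor that the resulting closed polygonal line satisfies the reflection law in $H=H_1\times H_2$, nor how the choice is tuned so that the trajectory passes near a prescribed $(q,p)$ when $(q,p)$ sits over $\partial H$ rather than over $\partial H^\circ$. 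These are not cosmetic omissions; they are where the third assertion lives.
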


F. Schlenk has established that the interior of the Lagrangian product of a crosspolytope and a cube (its polar) is ``almost'' a symplectic ball in the following sense: the product symplectically embeds into the ball of arbitrarily close volume, and vice versa. The four-dimensional case can be found in~\cite[Lemma~3.1.8]{schlenk2005embedding} or~\cite[\S 4]{latschev2013gromov}; the $2n$-dimensional case follows the same lines (see also~\cite[Lemma~5.3.1]{schlenk2005embedding}, where the same technique is exploited for a simplex instead of a crosspolytope). A reasonable question is if the Lagrangian product of a crosspolytope and a cube (or more generally, $H\times H^\circ$ for an arbitrary Hanner polytope) is a symplectic ball. However, it is not clear for me that the affirmative answer to this question would reprove the implications of Proposition~\ref{prop:hanner}. It would have reproved them if the boundary of the product had been smooth (in this case we would have applied the results of \cite{paiva2014contact}).

Let us also note that Theorem~\ref{thm:viterboeq} would immediately follow from the affirmative answer to the following

\begin{question}
\label{ques:permutball}
Is $P_n \times \triangle_n^{\circ}$ a symplectic ball? More precisely, is the interior of $P_n \times \triangle_n^{\circ}$ a symplectic image of a Euclidean ball of the same volume?
\end{question}

We do not know the answer already in the case $n=2$.

To prove Theorem~\ref{thm:viterboeq}, we will directly compute $\xi_{\triangle_n^\circ}(P_n) = c_{HZ}(P_n \times \triangle_n^\circ)$ and then check the equality:

$$
\volu (P_n \times \triangle_n^\circ) = \frac{c_{HZ}(P_n \times \triangle_n^\circ)^n}{n!}.
$$

The volume $\volu \triangle_n = \frac{\sqrt{n+1}}{2^{n/2} n!}$ can be easily computed, the Mahler volume product $\volu \triangle_n \cdot \volu \triangle_n^\circ = \frac{(n+1)^{n+1}}{(n!)^2}$ is well known, so we conclude that $\volu \triangle_n^\circ = \frac{2^{n/2} (n+1)^{n+1/2}}{n!}$. The volume $\volu P_n = \frac{(n+1)^{n-1/2}}{2^{n/2}}$ was already computed (Fact~\ref{fact:volume}).
So it suffices to compute the right-hand side.

\begin{proposition}
\label{prop:capacity}
In the above notation,
$$
c_{HZ} (P_n \times \triangle_n^\circ) = (n+1)^2.
$$
\end{proposition}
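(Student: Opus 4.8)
The plan is to pass, via the symplectic--billiard dictionary $c_{HZ}(K\times T)=\xi_T(K)$ recalled in the introduction, to the equivalent statement $\xi_{\triangle_n^\circ}(P_n)=(n+1)^2$, and to prove matching upper and lower bounds.

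First I would set up coordinates. Work in $\mathbb R^{n+1}$ with standard basis $f_0,\dots,f_n$ and hyperplane $E=\{x:\sum_k x_k=0\}$; after rescaling, $\triangle_n=\conv\{w_0,\dots,w_n\}\subset E$ with $w_k=f_k-\tfrac1{n+1}\mathbf 1$, so that $P_n=\sum_{i<j}[w_i,w_j]$ is (a multiple of) the standard permutohedron. A computation of support functions gives, for $u\in E$, that $h_{P_n}(u)=\sum_{i<j}\max(u_i,u_j)=\tfrac12\sum_{i<j}|u_i-u_j|$ and that $P_n=\{x\in E:\sum_{k\in S}x_k\le\tfrac{|S|(n+1-|S|)}{2}\text{ for all }S\}$, while the gauge $\|\cdot\|_{\triangle_n^\circ}$ becomes $\|q\|=-(n+1)\min_k q_k$ on $E$. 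Consequently the length of a closed polygon $Q=(q_1,\dots,q_m)$ is $\ell(Q)=(n+1)\sum_i\max_k\big((q_i)_k-(q_{i+1})_k\big)$, and by Theorem~\ref{thm:bezdeks} the quantity $\xi_{\triangle_n^\circ}(P_n)$ is the minimum of $\ell(Q)$ over cyclic configurations that do not fit into a translate of $\inte P_n$ (equivalently, over closed billiard trajectories in $P_n$).

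For the upper bound I would exhibit the two‑point configuration $Q_0=(0,v)$ where $v$ is a permutation of $(n,-1,\dots,-1)$. Then $\sum_k v_k=0$, and for the singleton $S=\{k_0\}$ with $v_{k_0}=n$ the inequality $\sum_{k\in S}v_k\le\tfrac{|S|(n+1-|S|)}2=n$ is an equality, so $v$ lies on $\partial(2P_n)$ and $\{0,v\}$ does not fit into a translate of $\inte P_n$; its length is $\ell(Q_0)=(n+1)(\max_k v_k-\min_k v_k)=(n+1)^2$. Hence $\xi_{\triangle_n^\circ}(P_n)\le(n+1)^2$ (for $n=2$ this bound is realized by the $4$‑bouncing trajectory of Proposition~\ref{prop:triangle}). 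One line more shows that this is optimal among two‑point configurations: if $\{0,v\}$ does not fit into a translate of $\inte P_n$, some inequality $\sum_{k\in S}v_k\le\tfrac{|S|(n+1-|S|)}2$ fails or is tight, and averaging over $S$ and, using $\sum_k v_k=0$, over its complement gives $\max_k v_k\ge n+1-|S|$ and $\min_k v_k\le-|S|$, whence $\max_k v_k-\min_k v_k\ge n+1$. Thus the content of the proposition is exactly that configurations with more points cannot do better.

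The lower bound $\xi_{\triangle_n^\circ}(P_n)\ge(n+1)^2$ is the main obstacle. Since $\xi_{\triangle_n^\circ}(P_n)$ is realized as a minimum over genuine closed billiard trajectories, it suffices to bound those. For a closed trajectory with momenta $p_i\in\partial\triangle_n^\circ$ (momentum $p_i$ on the leg $q_i\to q_{i+1}$), the generalized reflection law~\eqref{eq:generalizedreflection} reads $p_{i-1}-p_i\in N_{P_n}(q_i)$, so $\langle p_{i-1}-p_i,q_i\rangle=h_{P_n}(p_{i-1}-p_i)$, and the telescoping identity $\ell(Q)=\sum_i\langle p_i,q_{i+1}-q_i\rangle=\sum_i\langle p_{i-1}-p_i,q_i\rangle$ gives
\[
 \ell(Q)=\sum_i h_{P_n}(p_{i-1}-p_i)=(n+1)\sum_i h_{P_n}\big(\lambda^{(i)}-\lambda^{(i-1)}\big),
\]
where $\lambda^{(i)}$ is the probability vector with $p_i=\mathbf 1-(n+1)\lambda^{(i)}$ and, on $E$, $h_{P_n}(u)=\sum_{a<b}\max(u_a,u_b)$ is positive off the origin. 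It then remains to prove that the momentum sequence of a closed trajectory in the bounded body $P_n$ winds around the simplex of probability vectors enough that $\sum_i h_{P_n}(\lambda^{(i)}-\lambda^{(i-1)})\ge n+1$. The structural input is that the normals $N_{P_n}(q_i)$ at the bounce points positively span $E$ (else $P_n$ would look locally like a prism near every bounce and the trajectory could be slid off it), which forces the increments $\lambda^{(i)}-\lambda^{(i-1)}$ to positively span $E$. Extracting the sharp inequality from this — together with the majorization description $x\in P_n\iff x\prec(\tfrac n2,\tfrac n2-1,\dots,-\tfrac n2)$ — is the delicate step: the naive single‑direction ``width'' bound $\ell(Q)\ge(n+1)\max_k\big(\max_i(q_i)_k-\min_i(q_i)_k\big)$ only yields $n(n+1)$, so one must combine several directions and genuinely use the permutohedron's structure. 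An equivalent, purely combinatorial route stays on the primal side: reduce to $m\le n+1$ by Theorem~\ref{thm:bezdeks}, and estimate $\ell(Q)=(n+1)\sum_i\max_k\big((q_i)_k-(q_{i+1})_k\big)$ from below by tracking, for each $l=1,\dots,n$, the function $i\mapsto\big(\text{sum of the }l\text{ largest coordinates of }q_i\big)$, using that a ``stuck'' configuration admits no translate making all the majorization inequalities strict simultaneously.
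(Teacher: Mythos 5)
Your upper bound is fine and agrees with the paper's: the two--bounce trajectory along the width of $P_n$ (your two--point configuration $\{0,v\}$ with $v$ a permutation of $(n,-1,\dots,-1)$, which lies on $\partial(2P_n)=\partial(P_n-P_n)$) has $\|\cdot\|_{\triangle_n^\circ}$-length $(n+1)^2$ and does not fit into a translate of $\inte P_n$. But the lower bound $\xi_{\triangle_n^\circ}(P_n)\ge(n+1)^2$ is exactly the content of the proposition, and your proposal does not prove it: you correctly observe that the naive width bound only gives $n(n+1)$, and then you name the remaining inequality (``$\sum_i h_{P_n}(\lambda^{(i)}-\lambda^{(i-1)})\ge n+1$'', or its primal combinatorial counterpart) as ``the delicate step'' without supplying an argument. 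As it stands this is a genuine gap, not a proof. Note also that the route through momenta and the reflection law is unnecessarily restrictive here: by Theorem~\ref{thm:bezdeks} (Bezdek--Bezdek, in the form of~\cite{akopyan2014elementary}) it suffices to bound from below the length of \emph{any} closed polygonal line not fitting into a translate of $\inte P_n$, so no dynamical information about normal cones at bounce points is needed.

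The paper closes the gap as follows. Given any closed polygonal line not fitting into $\inte P_n$, each segment $[q,q']$ is replaced by a broken line whose edges point along $v_0,\dots,v_n$ (the vertices of $\triangle_n^\circ$), without changing the $\|\cdot\|_{\triangle_n^\circ}$-length, because the gauge is linear on each cone $\cone\{v_k:k\ne i\}$; the new line still fails to fit into $\inte P_n$ since it contains the old one in its convex hull's sense of ``not fitting'' (it passes through the same vertices). Since $\sum_i v_i=0$ is the only linear relation among the $v_i$, closedness forces the total length travelled in each of the $n+1$ directions to be the same number $\ell$. The heart of the matter is then Lemma~\ref{lem:fitinto}: if $\ell<n+1$, the line can be covered by a translate of (a homothet smaller than) $P_n$. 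That lemma is proved by induction on $n$, by contracting the line in the horizontal directions by the factor $\frac1{n+1}$, taking the Minkowski sum with the top facet $F\cong P_{n-1}$, applying the inductive hypothesis to the relative motion inside $F$, and verifying $F+\sum_{i=1}^n([v_0,v_i]-v_0)=P_n$. This covering argument is the idea your proposal is missing; your suggested majorization/winding heuristics would still need an equivalent of it to extract the sharp constant.
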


For the proof, we use the characterization of Bezdek and Bezdek of shortest generalized billiard trajectories in $P_n$, when lengths are measured using the norm with unit body $\triangle_n$.

Consider first the following 2-periodic trajectory: take the centers $m_1, m_2$ of two opposite facets of $P_n$ that are congruent to $P_{n-1}$. Then the generalized billiard trajectory $m_1 \to m_2 \to m_1$ is a 2-fold bypass of the width of $P_n$. Clearly, it cannot fit into $\inte P_n$. The $\triangle_n$-length of this trajectory is $(n+1)^2$, so we have an estimate from above: $c_{HZ} (P_n \times \triangle_n^\circ) \le (n+1)^2$.

To prove the estimate from below, we consider an arbitrary closed polygonal line that cannot fit into $\inte P_n$ and show that its $\triangle_n$-length cannot be less than $(n+1)^2$.

First, we replace each segment $[q,q']$ of this line with a certain polygonal line of same $\triangle_n$-length but with edges directed along $v_0, \ldots, v_n$. This can be done as follows. Consider the convex cone
$$
C_j = \pos\{\{v_0, \ldots, v_n\} \setminus \{v_j\}\}
$$
in which the vector $q'-q$ lies. Such $C_j$ exists since $\bigcup\limits_{i=0}^n C_i = \mathbb{R}^{2n}$.
Then, decompose $q'-q = \sum\limits_{0\le k \le n, \ k\neq j} \alpha_k v_k$ with $\alpha_k \ge 0$. Now the polygonal line with edges congruent to $\alpha_i v_i$ suits our purpose well. Its $\triangle_n$-length equals the $\triangle_n$-length of $q'-q$ in the given norm because the norm function is linear on $C_j$.

Now, we have the closed polygonal line with at most $n+1$ directions used. Let its segments be congruent to $\alpha_1 v_{j_1}, \ldots, \alpha_m v_{j_m}$. The total $\triangle_n$-length of the segments of this polygonal line along the direction $v_i$ is proportional to $\sum\limits_{j_k = i} \alpha_k$. Note that $\sum\limits_i v_i = 0$ is the only linear dependence among the directions, so the relation $\sum\limits_{k=1}^m \alpha_k v_{j_k} = 0$ is a multiple of $\sum\limits_i v_i = 0$. Thus, the total $\triangle_n$-length of the segments of this polygonal line along each direction $v_i$ does not depend on $i$. Assume the contrary to the statement that we are to prove: suppose that this length along each direction $v_i$ is less than $n+1$. We need to show that such a line fits into a smaller homothet of $P_n$, this will be a contradiction. We formulate it as a lemma.

\begin{lemma}
\label{lem:fitinto}
Suppose a closed polygonal line consists of segments directed only along $v_0, \ldots, v_n$. Suppose also that the total $\triangle_n$-length of all segments that are directed along $v_i$ equals $n+1$, for each $i = 0, 1, \ldots, n$. Then this line can be covered by a translate of $P_n$.
\end{lemma}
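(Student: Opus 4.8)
The plan is to establish the containment in three stages: a Helly-type reduction to $n+1$ points of the line, a convexity (Minkowski addition) argument that replaces an arbitrary such configuration by a convex combination of very special ones, and a direct check of the single extremal loop that remains.

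First I would reduce to finitely many points. Covering the closed polygonal line $L$ by a translate of $P_n$ is equivalent to nonemptiness of $\bigcap_{q}(q-P_n)$, the intersection ranging over the finitely many vertices $q$ of $L$; these are convex compact subsets of $\mathbb R^n$, so by Helly's theorem it suffices that every $n+1$ vertices $q_0,\dots,q_n$ of $L$, listed in cyclic order (set $q_{n+1}:=q_0$), fit into a translate of $P_n$. (Checking only that the width of $L$ in each facet-normal direction of $P_n$ is at most the corresponding width of $P_n$ would not suffice for a polytope like $P_n$ — which is exactly why the Helly step is needed.) Fixing such $q_0,\dots,q_n$ and translating so that $q_0=0$: every edge of $L$ is a positive multiple of some $v_i$, so each displacement $q_{\ell+1}-q_\ell$ equals $\sum_i c_{\ell i}v_i$ with $c_{\ell i}\ge 0$, and summing over $\ell$ recovers the total length $n+1$ used in direction $v_i$; thus the matrix $C=(c_{\ell i})_{\ell,i=0}^{n}$ is nonnegative with all column sums equal to $n+1$.

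Second, I would note that $\tfrac{1}{n+1}C$ is column-stochastic and hence a convex combination $\tfrac{1}{n+1}C=\sum_f\mu_f M_f$ of the vertices of the column-stochastic polytope, i.e. of the $0/1$ ``selection'' matrices $M_f$ (one $1$ per column), indexed by maps $f\colon\{0,\dots,n\}\to\{0,\dots,n\}$. Since $q_m=\sum_{\ell<m}\sum_i c_{\ell i}v_i$ is linear in $C$, this gives $q_m=\sum_f\mu_f\,q^f_m$ with $q^f_m:=(n+1)\sum_{i:\,f(i)<m}v_i$; and because $\sum_f\mu_f(P_n+c_f)=P_n+\sum_f\mu_f c_f$ for the convex body $P_n$, it is enough to cover, for each fixed $f$, the points $\{q^f_m\}_m$ by a translate of $P_n$. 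The subsets $A_m:=\{i:f(i)<m\}$ form a chain in $\{0,\dots,n\}$; refining it to a maximal chain only adds points, and a maximal chain is $\{\sigma(0),\dots,\sigma(l-1)\}_{l}$ for a permutation $\sigma$, so the task collapses to: for every permutation $\sigma$, the ``scaled simplex loop'' with vertices $(n+1)\sum_{l<m}v_{\sigma(l)}$ ($m=0,\dots,n+1$) lies in a translate of $P_n$.

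Finally I would prove this last, purely combinatorial, claim by exhibiting the translate explicitly, and this is the only place the geometry of $P_n$ enters. Recall that the vertices of $P_n=\sum_{i<j}[v_i,v_j]$ are precisely the points $\sum_i\pi(i)v_i$ over permutations $\pi$, and that, because $\sum_i v_i=0$, the coefficients $t_i$ of a point written $\sum_i t_i v_i$ are determined only up to adding a common constant. Translating the scaled simplex loop by the vertex $\sum_i\sigma^{-1}(i)v_i$ turns $(n+1)\sum_{l<m}v_{\sigma(l)}$ into $\sum_i\left((n+1)\,\mathbf{1}[\sigma^{-1}(i)<m]+\sigma^{-1}(i)\right)v_i$, and setting $p=\sigma^{-1}(i)$ one sees that the coefficient multiset is $\{m,m+1,\dots,m+n\}$, i.e. a permutation of $\{0,1,\dots,n\}$ shifted by $m$ — hence the same point as a genuine vertex of $P_n$. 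So every vertex of the scaled simplex loop actually sits on a vertex of the single translate $P_n-\sum_i\sigma^{-1}(i)v_i$, which closes the chain of reductions. The main obstacle is conceptual rather than technical: one must recognize that the correct normal form is the scaled simplex loop and that it fits into $P_n$ with no slack (vertex onto vertex), via the near-trivial observation that an integer cyclic shift of $(0,1,\dots,n)$ is, modulo $\sum_i v_i=0$, again a permutation; granting this, the Helly reduction, the column-stochastic decomposition, and the final index bookkeeping are all routine.
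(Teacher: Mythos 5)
Your argument is correct, and it takes a genuinely different route from the paper. The paper proves Lemma~\ref{lem:fitinto} by induction on $n$: it contracts the line $Q$ horizontally to $\tilde Q$, forms the Minkowski sum $R=\tilde Q+F$ with the top facet $F\cong P_{n-1}$, covers $Q$ by a translate of $R$ via the inductive hypothesis applied to the relative motion inside the moving facet, and then covers $R$ by $P_n$ using the splitting $P_n=\sum_{1\le i<j\le n}[v_i,v_j]+\sum_{i=1}^n[v_0,v_i]$. You avoid induction entirely: Helly reduces the problem to $n+1$ points, the column-stochastic polytope (a product of simplices, so its vertices are the $0/1$ selection matrices) lets you average translates of $P_n$ and reduce to the scaled simplex loops indexed by permutations, and for those the containment is verified vertex-onto-vertex using the standard description of the permutohedron's vertices as $\sum_i\pi(i)v_i$ together with the relation $\sum_i v_i=0$. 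Each step checks out — in particular the final coefficient computation (the multiset $\{m,\dots,m+n\}$ is a shift of $\{0,\dots,n\}$) is exactly right, as one can confirm for $n=2$, where the extremal loop is the large triangle inscribed in the hexagon. What your approach buys is an explicit, non-recursive certificate and an identification of the extremal configurations (the inscribed $(n+1)$-gonal loops that touch $P_n$ with no slack); what the paper's induction buys is that it never needs the Helly reduction or the stochastic-matrix decomposition and stays closer to the recursive facet structure of the permutohedron, which is also what drives the related tiling/Voronoi arguments in Section~\ref{sec:inequality}. One cosmetic remark: if the line has fewer than $n+1$ vertices the Helly step is vacuous and you should just run the decomposition argument on all of them (the column-stochastic polytope argument works for any number of rows), but this is a trivial adjustment.
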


\begin{proof}
We proceed by induction on $n$. The base case $n = 1$ is clear.

To begin, we note that the top ``horizontal'' (meaning orthogonal to $e_n$, which is supposed to be pointed ``upwards'') facet $F$ of $P_n$ is congruent to a copy of $P_{n-1}$ placed horizontally in $\mathbb{R}^n$. Explicitly,
$$
F = n v_0 + \sum\limits_{1\le i < j\le n} [v_i, v_j].
$$

Now let $Q = q_0 \to q_1 \to \ldots \to q_{m-1} \to q_m = q_0$ be a polygonal line satisfying the assumptions of the lemma, and let $q_0$ be the highest (having the largest coordinate along $e_n$) vertex of $Q$. Let us introduce a parametrization $q : [0,n+1] \to Q$, so that the point $q(t)$ runs along $Q$ with the constant velocity $(n+1)$. Without loss of generality, $q(0) = q(n+1) = q_0$.

Consider the following transformation of $Q$. We contract it using the transform
$$
\begin{pmatrix}
\frac{1}{n+1} & 0 & \vdots & 0 & 0\\
 0 & \frac{1}{n+1} & \vdots & 0 & 0\\
 \cdots & \cdots & \ddots & \cdots & \cdots\\
 0 & 0 & \vdots & \frac{1}{n+1} & 0\\
 0 & 0 & \vdots & 0 & 1\\
\end{pmatrix}
$$
and obtain the line $\widetilde Q$ with the corresponding parametrization $\widetilde q(\cdot)$. Also, denote by $\widetilde v_i$ the image of $v_i$ under this transform. Note that $(n+1)\widetilde v_i =  v_i-v_0$.

Now we consider the Minkowski sum
$$
R = F + \widetilde Q = \bigcup\limits_{t\in[0,n+1]} (F + \widetilde q(t))
$$
and claim that $Q$ can be covered by $R$ (see Figure~\ref{pic:permutfit1}). We will find $s \in \mathbb{R}^n$ such that $q(t) \in F+s+\widetilde q(t)$ for all $t$. We determine the vertical component $\langle s, e_n \rangle = - \frac{n+1}{2} |v_0|$ from the relation $q(t) \in F+s+\widetilde q(t)$. So we need to adjust the horizontal component of $s$ in order to satisfy $q(t) \in F+s+\widetilde q(t)$.

%
Let $H = \mathbb{R}^{n-1} \times \{0\}$ be the horizontal subspace of $\mathbb{R}^n$. Let $\pi: \mathbb{R}^n \to H$ the orthogonal projection. Endow $H$ with the norm given by the unit body $\triangle_{n-1} = \conv\{\pi(v_1), \ldots, \pi(v_n)\}$. Identify $P_{n-1}$ with $\pi\left(F\right) = \sum\limits_{1\le i < j\le n} [\pi(v_i), \pi(v_j)] \subset H$. Consider the polygonal line $q(t) - \widetilde q(t)$, for $t\in[0,n+1]$. When $q(t)$ traverses along $v_i$, $1\le i \le n$, $q(t) - \widetilde q(t)$ traverses along $\pi(v_i)$. Whenever $q(t)$ traverses along $v_0$, $q(t) - \widetilde q(t)$ stays fixed. The overall time during which $q(t)$ traverses along $v_0$ is 1 (out of $n+1$). Let us reparametrize the polygonal line $q(t) - \widetilde q(t)$ by skipping all the time intervals during which it stays fixed. This way we will get a polygonal line $q'(t)$, for $t \in [0,n]$.
We make a few observations:
\begin{itemize}
\item $q'(t) \in H$ for all $t\in[0,n]$;
\item if we measure distances in $H$ using the $\triangle_{n-1}$-norm, then $q'(t)$ travels with the constant velocity $n$;
\item $q'(t)$ travels only along the distinguished directions $\pi(v_1), \ldots, \pi(v_n)$.
\end{itemize}
Therefore, we can apply the inductive assumption: there exists $s' \in H$ such that $q'(t) \in P_{n-1} + s'$ for all $t \in [0,n]$. Consequently, $q(t) - \widetilde q(t) \in P_{n-1} + s'$ for all $t \in [0,n+1]$.

Finally, we set $s = s' - \frac{n+1}{2} v_0 \in \mathbb{R}^n$. Then we have
$$
q(t) \in P_{n-1} + s' + \widetilde q(t) = F - \frac{n+1}{2} v_0 + s' + \widetilde q(t) = F+s+\widetilde q(t),
$$
so $Q \subset R + s$.

\begin{figure}[h]
\centering
\includegraphics[width=0.75\textwidth]{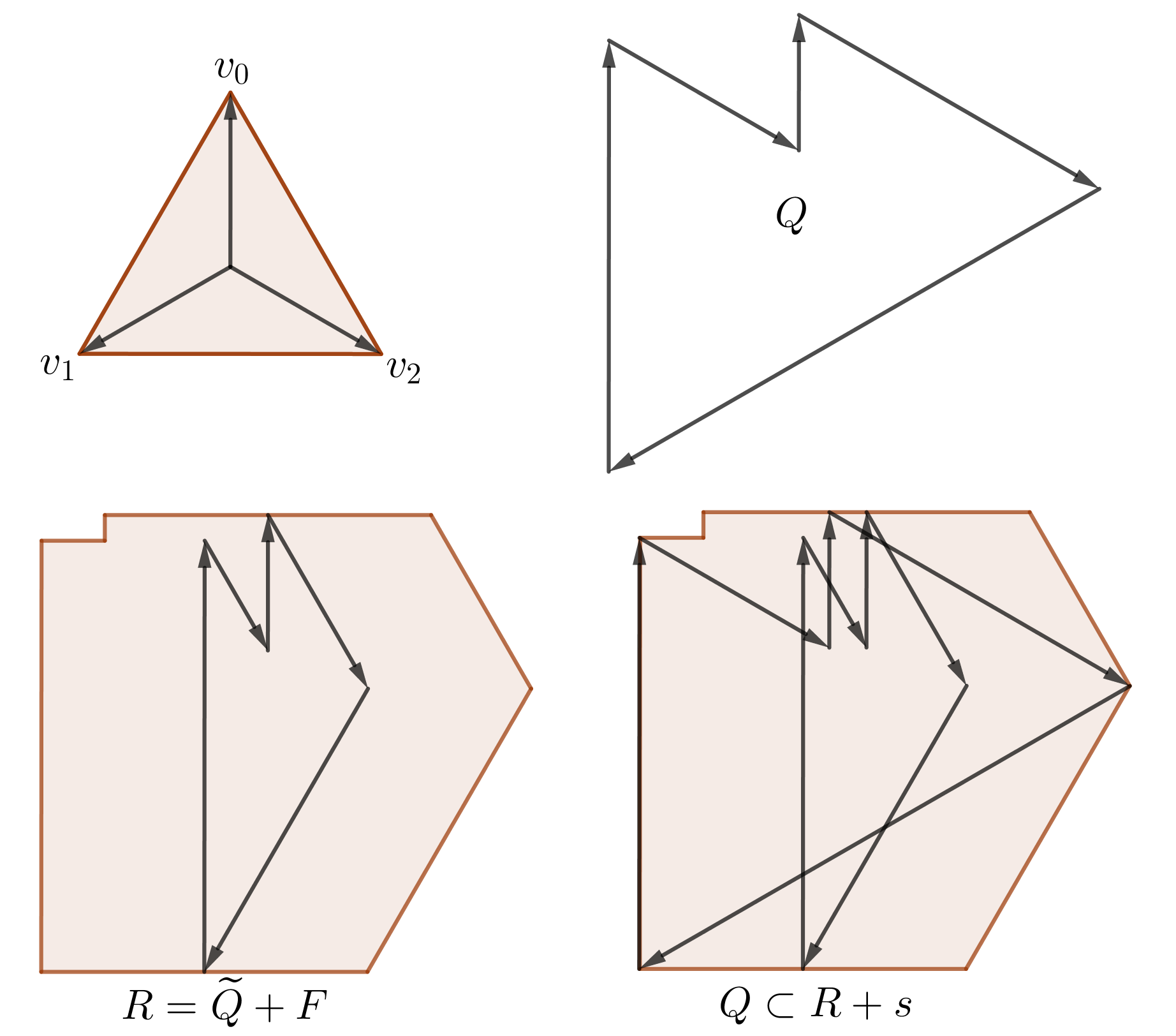}
\caption{$Q$ covered by $R$}
\label{pic:permutfit1}
\end{figure}

The next step is to cover the set $R$ by $P_n$ (see Figure~\ref{pic:permutfit2}). To do this, we consider the coordinates $(\widetilde q_1(t), \ldots, \widetilde q_n(t))$ of the point $\widetilde q(t)$ in the basis $\widetilde v_1, \ldots, \widetilde v_n$. Shift $\widetilde Q$ (and $R = \widetilde Q + F$ accordingly) so that, for any $i$,
$$
\min\limits_{t\in [0,n+1]} \widetilde q_i(t) = 0.
$$
We claim that $R \subset P_n$ after such a shift. Indeed, any point of $R$ can be represented as
$$
f+ \widetilde q(t) = f + \sum\limits_{i=1}^n \widetilde q_i(t)\widetilde v_i,
$$
for some $t\in [0,n+1]$ and $f \in F$. From the assumptions of the lemma, it follows that $\widetilde q_i(t) \in [0,n+1]$. Hence, the point $\sum\limits_{i=1}^n \widetilde q_i(t) \widetilde v_i$ belongs to the Minkowski sum of the non-horizontal segments
$$
\sum\limits_{i=1}^n [0,(n+1)\widetilde v_i] = \sum\limits_{i=1}^n ([v_0, v_i] - v_0).
$$
Hence,
$$
f + \sum\limits_{i=1}^n \widetilde q_i(t) \widetilde v_i \in F + \sum\limits_{i=1}^n ([v_0, v_i] - v_0) = \sum\limits_{1\le i < j\le n} [v_i, v_j] + \sum\limits_{i=1}^n [v_0, v_i] = P_n,
$$
as required.
\end{proof}
\begin{figure}[h]
\centering
\includegraphics[width=0.45\textwidth]{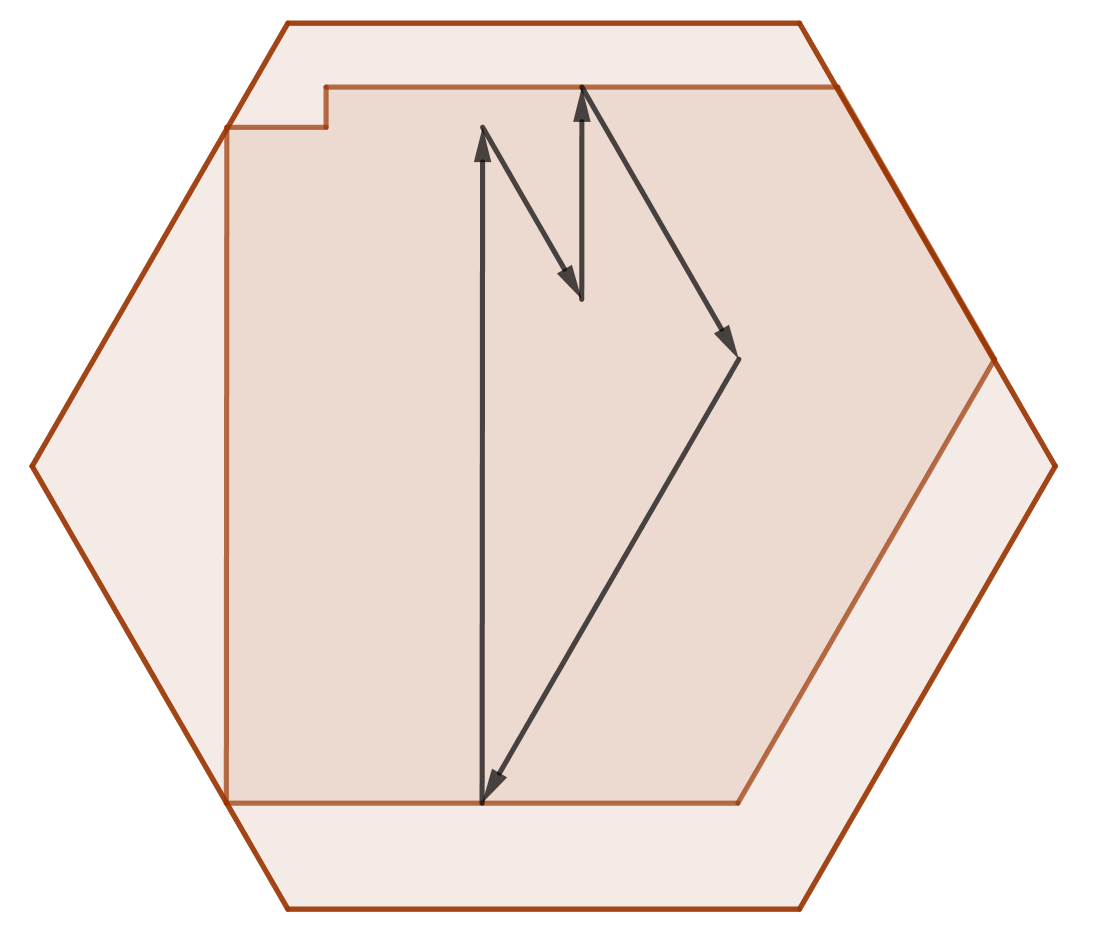}
\caption{$R$ covered by $P_n$}
\label{pic:permutfit2}
\end{figure}

\begin{remark}
\label{rem:uppercapacity}

As a referee pointed out, an example of a two-bouncing trajectory in the configuration $P_n \times \triangle_n^\circ$ estimates from above the cylindrical capacity $c_{cyl}(P_n \times \triangle_n^\circ)$ by $(n+1)^2$ (see~\cite[Remark 4.2]{artstein2014from}). So we have
$$
(n+1)^2 \le c_{HZ}(P_n \times \triangle_n^\circ) \le c_{cyl}(P_n \times \triangle_n^\circ) \le (n+1)^2,
$$

and all the capacities between $c_{HZ}$ and $c_{cyl}$ coincide on the Lagrangian product $P_n \times \triangle_n^\circ$.
Consequently, the equality in (\ref{eq:viterbo}) holds for $X = P_n \times \triangle_n^\circ$ and any symplectic capacity greater than the Hofer--Zehnder capacity.

\end{remark}

\section{Special cases of Viterbo's conjecture}
\label{sec:inequality}

Viterbo's conjecture is proven~\cite{hermann1998non} for ellipsoids, polydiscs and convex Reinhardt domains. However, even for $X \subset \mathbb{R}^4$ Viterbo's conjecture remains widely open. Here we prove the following special cases of the conjecture.

\begin{theorem}
\label{thm:viterbosimplex}
Let $K \subset V = \mathbb{R}^n$ be a convex body. Let $\triangle_n \subset V^*$ be a simplex. Then
$$
\volu (K \times \triangle_n) \ge \frac{c_{HZ}(K \times \triangle_n)^n}{n!},
$$
where $K$ and $\triangle_n$ lie in Lagrangian subspaces.
\end{theorem}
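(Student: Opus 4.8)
The plan is to pass to the billiard description and then exhibit an explicit short trajectory. By the interpretation of the Hofer–Zehnder capacity recalled in the introduction, $c_{HZ}(K\times\triangle_n)=\xi_{\triangle_n}(K)$, so it suffices to prove the metric inequality
$$
\xi_{\triangle_n}(K)\le\bigl(n!\,\volu K\cdot\volu\triangle_n\bigr)^{1/n}.
$$
This inequality is invariant under $K\mapsto AK$, $\triangle_n\mapsto(A^{-1})^{*}\triangle_n$ for $A\in GL_n$ (such a pair lifts to a linear symplectomorphism of $\mathbb R^{2n}$ preserving both $c_{HZ}$ and the volume), so I would first normalize $\triangle_n$ so that the unit ball $\triangle_n^{\circ}$ of the gauge $\|\cdot\|_{\triangle_n}$ equals $\conv\{\mathbf 1,-e_1,\dots,-e_n\}$, where $\mathbf 1=e_1+\dots+e_n$. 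Then $\|q\|_{\triangle_n}=(n+1)(\max_j q_j)^{+}-\sum_i q_i$ and $\volu\triangle_n=(n+1)^{n}/n!$, and the goal becomes simply $\xi_{\triangle_n}(K)\le (n+1)(\volu K)^{1/n}$.

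The feature to exploit is that for a \emph{closed} polygonal line $Q$ with edge vectors $\delta$ one has $\sum_{\delta}\delta=0$, hence
$$
\ell_{\triangle_n}(Q)=\sum_{\delta}\Bigl[(n+1)(\max_j\delta_j)^{+}-\sum_i\delta_i\Bigr]=(n+1)\sum_{\delta}(\max_j\delta_j)^{+}:
$$
edges pointing along $-e_1,\dots,-e_n$ (the lower $n$ vertices of $\triangle_n^{\circ}$) are free, and only upward progress along $\mathbf 1$ is charged. By the straightening move used in the proof of Theorem~\ref{thm:viterboeq} one may, without changing the infimum $\xi_{\triangle_n}(K)$, restrict attention to polygonal lines whose edges all point along the vertices $\mathbf 1,-e_1,\dots,-e_n$ of $\triangle_n^{\circ}$. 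Closure then forces the total $\mathbf 1$-length to equal, for each $i$, the total $(-e_i)$-length; calling this common value $s$, one gets $\ell_{\triangle_n}(Q)=(n+1)s$, and $\conv Q$ is, up to subdividing edges, a dilate $sS_\sigma$ of one of finitely many fixed simplices obtained by cyclically ordering the edges. By Theorem~\ref{thm:bezdeks} it then remains to bound by $(\volu K)^{1/n}$ the least dilation $s^{*}$ for which some such $\conv Q$ (possibly a subdivided fattening of an $S_\sigma$) cannot be translated inside $\inte K$.

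This last step is where the real difficulty lies, and I expect it to be the main obstacle. The naive criterion ``$\conv Q$ fails to fit once $\volu(\conv Q)\ge\volu K$'' yields only $s^{*}\le(n!\,\volu K)^{1/n}$, which is too large by a factor $(n!)^{1/n}$; one must instead invoke the genuine obstruction to fitting a convex body inside $\inte K$ — a width (mixed–volume) condition, not a volume one — together with the freedom to choose the cyclic order $\sigma$, to subdivide edges so as to fatten $\conv Q$, and to position $Q$ with respect to $K$. I would attempt this by induction on $n$, in the spirit of Lemma~\ref{lem:fitinto}: slice $K$ by the hyperplanes $\{\langle\mathbf 1,x\rangle=c\}$, apply the $(n-1)$–dimensional statement to the relative motion of $Q$ inside the moving sections (which, in suitable coordinates, translate along $-e_1,\dots,-e_{n-1}$), and use a one–dimensional accounting for the $\mathbf 1$–direction, so that the factor $n+1$ is assembled slice by slice, in the same way that $(n+1)^{2}=(n+1)\cdot n+(\text{boundary terms})$ is assembled in Theorem~\ref{thm:viterboeq}. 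The delicate point is that, unlike $K=P_n$, the sections of a general $K$ are arbitrary convex bodies, so the induction hypothesis must be used in full strength and the tally of $\mathbf 1$–progress against the volume swept must come out exactly sharp. Granting this, $s^{*}\le(\volu K)^{1/n}$ and the theorem follows; the parallelotope case runs along the same lines, with $\square_n$ replacing $\triangle_n$ and the straightened trajectories using the $2n$ directions $\pm e_i$.
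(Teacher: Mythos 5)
Your reduction to the billiard inequality $\xi_{\triangle_n}(K)\le (n+1)(\volu K)^{1/n}$ (after the affine normalization) and the observation that for closed polygonal lines only the $\mathbf 1$-component of each edge is charged are both fine, and they match the paper's starting point. But the proof stops exactly where the theorem begins: you explicitly defer the step $s^{*}\le(\volu K)^{1/n}$, and the slicing induction you sketch for it is not carried out and is not the mechanism that makes the bound sharp. In addition, the structural claim that a straightened closed line with edges along the $n+1$ vertex directions has, up to subdivision, convex hull equal to a dilate of one of finitely many simplices $S_\sigma$ is false: interleaving the directions in different orders with different step lengths produces an infinite family of convex hulls (already for $n=2$ one gets hexagons, and in general anything up to a permutohedron — cf.\ Lemma~\ref{lem:fitinto}, which shows the correct universal container is $P_n$, not a simplex). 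So the extremal shape you should be aiming at is the permutohedron, and no finite case analysis over cyclic orders is available.

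The paper closes the gap by a lattice-covering argument that is absent from your proposal. Normalize so that $\xi_{\triangle_n}(K)=n+1$ and let $\Lambda$ be the lattice generated by $v_0,\dots,v_n$ (a copy of $A_n^{*}$, whose Voronoi cell is a permutohedron $p_n$). Each simplex of the Delaunay triangulation dual to the Voronoi tessellation carries a closed polygonal line through its vertices with unit edges in the directions $v_0,\dots,v_n$, hence of $\|\cdot\|_{\triangle_n}$-length $n+1$; by the normalization each such line, and therefore each Delaunay simplex, fits into a translate of $K$. A degree-theoretic lemma (Lemma~\ref{lem:avoidtriangulation}) then forces every translate of $K$ to meet $\Lambda$, i.e.\ $\bigcup_{\lambda\in\Lambda}(\lambda+K)=\mathbb{R}^n$, whence $\volu K\ge\volu p_n$; the sharp constant then comes from the equality case $\volu(p_n\times\triangle_n)=c_{HZ}(p_n\times\triangle_n)^n/n!$ established in Section~\ref{sec:equality}. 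Without this (or an equivalent) covering argument, your outline does not yield the theorem.
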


\begin{theorem}
\label{thm:viterbocube}
Let $K \subset V = \mathbb{R}^n$ be a convex body. Let $\square_n \subset V^*$ be a parallelotope. Then
$$
\volu (K \times \square_n) \ge \frac{c_{HZ}(K \times \square_n)^n}{n!},
$$
where $K$ and $\square_n$ lie in Lagrangian subspaces.
\end{theorem}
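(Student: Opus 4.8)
The plan is to trade Viterbo's inequality for $K\times\square_n$ for the equivalent billiard statement $\xi_{\square_n}(K)^n\le n!\,\volu(K\times\square_n)$ via $c_{HZ}(K\times\square_n)=\xi_{\square_n}(K)$, and then to prove it by exhibiting one very short closed trajectory and bounding its length by an elementary volume inequality. I would begin by normalizing: the linear maps $(q,p)\mapsto(Aq,A^{-t}p)$ are symplectomorphisms and are volume-preserving, so replacing $K\times\square_n$ by $AK\times A^{-t}\square_n$ changes neither $\volu$ nor $c_{HZ}$, and a suitable choice of $A$ arranges $\square_n=[-1,1]^n$. Then $\|q\|_{\square_n}=\sum_i|q_i|$, $\volu\square_n=2^n$, and it remains to prove $\xi_{\square_n}(K)\le 2\,(n!\,\volu K)^{1/n}$.

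Next I would build the trajectory. For each $i$ set $\ell_i(K)=\max\{\lambda\ge0:\lambda e_i\in K-K\}$, the length of the longest chord of $K$ parallel to $e_i$, and $\mu(K)=\min_i\ell_i(K)$. For an $i$ attaining the minimum the vector $\mu(K)e_i$ lies on $\partial(K-K)$, so the two-point configuration $\{0,\mu(K)e_i\}$ fits into no translate of $\inte K$; hence $(0,\mu(K)e_i)\in\mathcal P_2(K)$, and Theorem~\ref{thm:bezdeks} gives $\xi_{\square_n}(K)\le\ell_{\square_n}(0,\mu(K)e_i)=2\|\mu(K)e_i\|_{\square_n}=2\mu(K)$.

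So everything comes down to the convex-geometric inequality $\mu(K)^n\le n!\,\volu K$, which is the heart of the matter, and which I would prove by induction on $n$ (with $n=1$ an equality). Let $\pi\colon\mathbb R^n\to\mathbb R^{n-1}$ forget the last coordinate. A chord of $K$ parallel to $e_i$ with $i<n$ projects to a chord of $\pi K$ of equal length, so $\ell_i(\pi K)\ge\ell_i(K)$ and hence $\mu(\pi K)\ge\mu(K)$. Meanwhile the length $f(y)$ of the $e_n$-fibre of $K$ over $y\in\pi K$ is a nonnegative concave function of $y$ with $\max f=\ell_n(K)$; bounding $f$ below by the affine ``cone'' function on $\pi K$ that takes the value $\ell_n(K)$ at the maximizer and $0$ on $\partial(\pi K)$ and integrating gives $\volu K=\int_{\pi K}f\ge\tfrac1n\,\ell_n(K)\,\volu_{n-1}(\pi K)$. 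Feeding in the inductive bound on $\volu_{n-1}(\pi K)$ together with $\mu(\pi K)\ge\mu(K)$ and $\ell_n(K)\ge\mu(K)$ yields $\volu K\ge\mu(K)^n/n!$, and combining with the previous paragraph finishes the proof: $\xi_{\square_n}(K)^n\le 2^n\mu(K)^n\le 2^n n!\,\volu K=n!\,\volu(K\times\square_n)$.

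The main obstacle is precisely the inequality $\mu(K)^n\le n!\,\volu K$, and in particular getting the constant sharp: the obvious attempt — applying Rogers–Shephard, $\volu(K-K)\le\binom{2n}{n}\volu K$, to the inscribed cross-polytope $\mu(K)\,\square_n^\circ\subseteq K-K$ — only delivers $\mu(K)^n\le(2n-1)!!\cdot\volu K$, which is weaker than needed, so the concave-fibre/cone-volume induction above seems to be the right device. It is worth noting that the whole chain is simultaneously tight at $K=\square_n^\circ$, where $\mu=2$, $\xi_{\square_n}(\square_n^\circ)=4$ and $\volu\square_n^\circ=2^n/n!$; this both recovers the equality case $\square_n^\circ\times\square_n$ and shows the companion $\ell_1$-billiard inequality to be best possible. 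Finally, the same strategy — a short ``coordinate'' trajectory plus an induction of the same flavor — ought to prove Theorem~\ref{thm:viterbosimplex} with the simplex in place of the parallelotope.
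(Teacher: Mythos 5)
Your proof is correct, but it takes a genuinely different route from the paper's. The paper proves the theorem by iterating Lemma~\ref{lem:viterboaddsegment}: it peels off one segment factor of $\square_n$ at a time, carrying along the full inductive statement ``$\volu(M\times L)\ge c_{HZ}(M\times L)^{n-1}/(n-1)!$ for \emph{all} $M$'', and at each step combines a capacity bound for the projection $\pi_H(K)$ with the Rogers--Shephard projection--section inequality. You instead decouple the symplectic and volumetric parts at the outset: the two-bounce configuration $(0,\mu(K)e_i)\in\mathcal P_2(K)$ with $\mu(K)=\min_i\ell_i(K)$ gives $\xi_{\square_n}(K)\le 2\mu(K)$ in one stroke (this is sound --- if $\{0,\mu(K)e_i\}$ fit into $\inte K+t$ then $\mu(K)e_i$ would lie in $\inte(K-K)$, contradicting $\mu(K)e_i\in\partial(K-K)$), and the remaining inequality $\mu(K)^n\le n!\,\volu K$ is pure convex geometry, proved by a projection induction in which your concave-fibre/cone estimate $\volu K\ge\tfrac1n\ell_n(K)\volu_{n-1}(\pi K)$ is exactly the codimension-one case of the Rogers--Shephard inequality the paper quotes. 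Your version is more elementary and self-contained, and it makes transparent where the constant $n!$ comes from; the paper's lemma is less direct but reusable --- it accepts an arbitrary factor $L$, which is what yields Remark~\ref{rem:viterbosimplexcube} on mixed products $K\times\triangle_k\times\square_m$. One caveat on your closing sentence: the analogous strategy does \emph{not} straightforwardly give Theorem~\ref{thm:viterbosimplex} --- there the shortest trajectory is $(n+1)$-bouncing rather than $2$-bouncing, and the paper needs the permutohedral Voronoi/triangulation machinery of Section~\ref{sec:inequality} --- but that remark is outside the theorem you were asked to prove.
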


The latter can be interpreted as a sharp isoperimetric-like inequality for billiard trajectories in the $\ell_1$-norm, with equality attained on the crosspolytope $K = \square_n^\circ$:
$$
\xi_{\square_n}(K) \le 2\left(n! \volu (K)\right)^{1/n}.
$$
Similarly, the former is a sharp isoperimetric-like inequality for billiard trajectories in the non-symmetric $\|\cdot\|_{\triangle_n}$-norm, with equality attained on certain permutohedra.

For the proof of Theorem~\ref{thm:viterbosimplex} we will need a simple topological lemma.

\begin{lemma}
\label{lem:avoidtriangulation}
Let $T$ be a triangulation of $\mathbb{R}^n$ with the diameters of simplices uniformly bounded from above. Let $\Lambda$ be the vertex set of $T$. Assume that every simplex of $T$ can be covered by a translate of a convex body $K \subset \mathbb{R}^n$. Then any translate of $K$ meets $\Lambda$.
\end{lemma}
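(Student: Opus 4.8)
The plan is to argue by contradiction via a degree-theoretic obstruction. Suppose some translate $K' = K + x$ of $K$ is disjoint from $\Lambda$; I will use the hypothesis that every simplex of $T$ lies in a translate of $K$ (equivalently of $K'$) to build a continuous map $\mathbb{R}^n \to S^{n-1}$ that cannot exist. First, to each vertex $\lambda \in \Lambda$ I attach the direction in which it ``sticks out'' of $K'$: since $K'$ is compact and convex and $\lambda \notin K'$, the nearest-point projection $\pi(\lambda) \in K'$ is well defined, $v(\lambda) := \lambda - \pi(\lambda) \neq 0$, and I set $F(\lambda) := v(\lambda)/|v(\lambda)| \in S^{n-1}$. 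The characterizing inequality of the metric projection onto a convex set, $\langle \lambda - \pi(\lambda), z - \pi(\lambda)\rangle \le 0$ for all $z \in K'$, rearranges (using $\langle v(\lambda), \lambda\rangle - \langle v(\lambda), \pi(\lambda)\rangle = |v(\lambda)|^2 > 0$) into the separation property
$$\langle F(\lambda), z\rangle < \langle F(\lambda), \lambda\rangle \qquad \text{for every } z \in K'.$$

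Next I interpolate $F$ over the triangulation. For a point $y$ lying in a simplex $\sigma = \conv\{w_0, \dots, w_n\}$ of $T$ with barycentric coordinates $(\beta_0, \dots, \beta_n)$, put $G(y) := \sum_{i} \beta_i F(w_i)$. Since $T$ is a locally finite simplicial complex and barycentric coordinates match on common faces, $G$ is a well-defined continuous (piecewise-linear) map $\mathbb{R}^n \to \mathbb{R}^n$. The key point is that $G$ has no zero. Indeed, fix $y$ in a simplex $\sigma$ with vertices $w_0, \dots, w_n$; by hypothesis $\sigma$ is contained in a single translate $K' + s$, so $w_i - s \in K'$ for every $i$, and applying the separation property with $z = w_i - s$ gives $\langle F(w_i), s\rangle > 0$ for every $i$. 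Hence $\langle G(y), s\rangle = \sum_i \beta_i \langle F(w_i), s\rangle > 0$, so $G(y) \neq 0$. This is exactly where the covering hypothesis is used, and the crucial feature is that one and the same vector $s$ works simultaneously for all $n+1$ vertices of $\sigma$.

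It remains to reach the contradiction. Fix a ball $B(c_0, \rho) \supseteq K'$. From $|\pi(\lambda) - c_0| \le \rho$ one checks that the direction of $F(\lambda) = (\lambda - \pi(\lambda))/|\lambda - \pi(\lambda)|$ converges to that of $\lambda - c_0$ as $|\lambda| \to \infty$, with an error controlled by $\rho/|\lambda - c_0|$; and since each simplex of $T$ has diameter at most $d$, the $n+1$ vertices of the simplex containing a far-away point $y$ all lie within distance $d$ of $y$. Consequently $\widehat{G} := G/|G| : \mathbb{R}^n \to S^{n-1}$ satisfies $\widehat{G}(y) \to (y - c_0)/|y - c_0|$, uniformly, as $|y - c_0| \to \infty$. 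Therefore, for $R$ sufficiently large, $\widehat{G}|_{\partial B(c_0, R)}$ is homotopic to the outward radial homeomorphism $y \mapsto (y-c_0)/R$ of $\partial B(c_0,R)$ onto $S^{n-1}$ and so has degree $1$; but $\widehat{G}$ is defined on the whole ball $\overline{B(c_0, R)}$, which forces its boundary restriction to be null-homotopic --- a contradiction (for $n = 1$ this is just the statement that a continuous map from a segment to the two-point space $S^0$ is constant). Equivalently, one may finish by a Poincar\'e--Hopf type argument: the vector field $G$ points outward along $\partial B(c_0, R)$ for $R$ large, hence must vanish somewhere inside, contradicting the previous paragraph.

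The genuinely new idea here is the non-vanishing of the interpolated field $G$, which combines the projection inequality with the fact that a whole simplex embeds into a single translate of $K$; this step is short once set up. The part requiring the most care in a full write-up is the asymptotic analysis of $\widehat{G}$ at infinity, making the ``uniformly radial at infinity'' claim precise --- and this is precisely the place where the diameter bound $d$ is indispensable (without it a far-away simplex could have a vertex near $K'$, spoiling the direction of $G$ there).
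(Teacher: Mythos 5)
Your proof is correct and follows essentially the same route as the paper's: a piecewise-linear interpolation over $T$ of unit vectors pointing away from the offending translate of $K$, combined with a degree argument on a large sphere centered near that translate. The only difference is one of arrangement --- you show directly that the interpolated field cannot vanish on a covered simplex (by pairing against the translation vector $s$), whereas the paper first extracts a zero from the degree argument and then invokes the ``cage'' lemma of Bezdek--Bezdek for the contradiction.
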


\begin{proof}
Let $d \in \mathbb{R}$ be an upper bound for the diameters of the cells of $T$.
Suppose there is a translate $K+t$ of $K$ avoiding $\Lambda$. We define a continuous vector field $\nu : \mathbb{R}^n \to \mathbb{R}^n$ as follows. For every vertex $\lambda \in \Lambda$, set $v(\lambda)$ equal to any unit vector pointing away from $K+t$. More precisely, take any hyperplane $H \ni \lambda$ avoiding $K+t$, and set $\nu(\lambda)$ to be the normal vector of $H$ pointing to the halfspace missing $K+t$ (see Figure~\ref{pic:brouwer}, illustrating the two-dimensional case). Then extend $\nu$ affinely to the entire $\mathbb{R}^n$.

\begin{figure}[h]
\centering
\includegraphics[width=0.6\textwidth]{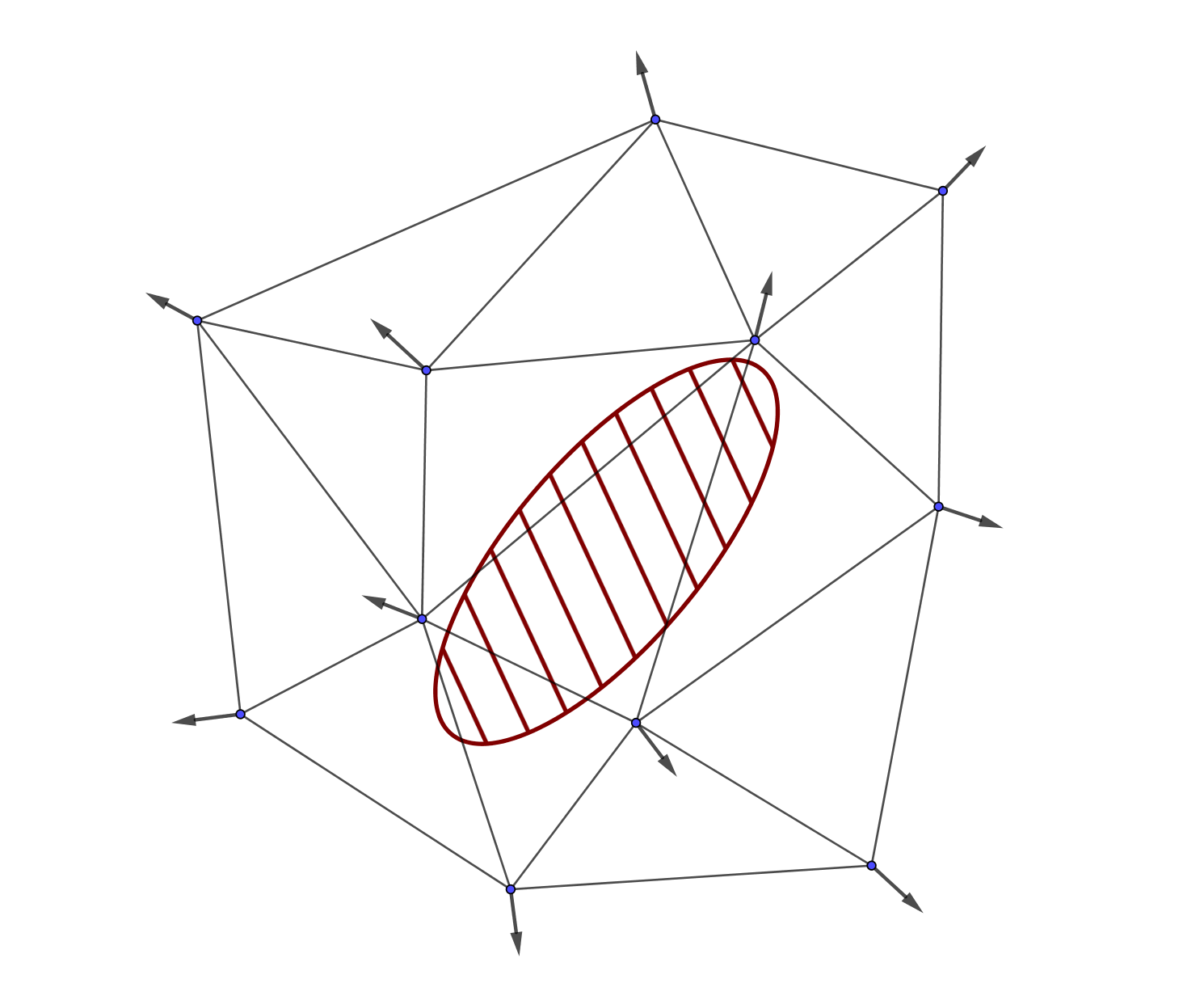}
\caption{Construction of vector field $\nu$}
\label{pic:brouwer}
\end{figure}

Now take a ball $B$ centered at any point of $K+t$. We take it large enough, say, of radius $100 (d + \diam K)$. Then $\frac{\nu}{|\nu|}\vert_{\partial B} : \partial B \to S^{n-1}$ has degree 1, hence there is $x \in \inte B$ such that $v(x) = 0$. If $x$ lies in a simplex $\sigma$ of $T$ with vertices $v_0, \ldots, v_n$, then there are non-negative multipliers $\alpha_0, \ldots, \alpha_n$, not all zeros, such that $\sum\limits_i \alpha_i \nu(v_i) = 0$. Informally, this means that $K+t$ is ``blocked'' inside the ``cage'' $\sigma$ with the ``cage bars'' $v_i$. Let us show rigorously that in this situation $\sigma$ cannot be covered by a translate of $K$.
We know that $K+t$ does not cover the vertices of $\sigma$. The choice of $\nu(v_i)$ implies that $\langle u - v_i, \nu(v_i) \rangle < 0$ for any $u \in K+t$. Assume that there is a translate $K+t+s$ covering $\sigma$. Multiplying $s$ by $\sum\limits_i \alpha_i \nu(v_i)$, we get
$$
\sum\limits_{i=0}^n \alpha_i \langle s, \nu(v_i) \rangle = 0.
$$
It follows that $\langle s, \nu(v_i) \rangle \le 0$ for some $i$. Then we obtain a contradiction as follows:
$$
0 = \langle v_i - v_i, \nu(v_i) \rangle = \langle \underbrace{(v_i-s)}_{\in K+t} - v_i, \nu(v_i) \rangle + \langle s, \nu(v_i) \rangle < 0.
$$
\end{proof}

\begin{proof}[Proof of Theorem~\ref{thm:viterbosimplex}]

The inequality in question is invariant under the following family of transformations: if $V$ is acted on by an affine transform $A$, then $V^*$ is acted on by $(A^*)^{-1}$. Any simplex is affinely equivalent to a regular one, so we can assume that $\triangle_n^\circ = \conv\{u_0, \ldots, u_n\}$ is a regular simplex centered at the origin, with $|u_0| = \ldots = |u_n| = 1$. Further, we scale $K$ so that $c_{HZ}(K \times \triangle_n)$ becomes equal to $(n+1)$.

Consider the lattice $\Lambda$ generated by the vectors $u_0, \ldots, u_n$; recall that $\Lambda$ is just a scaled copy of $A_n^*$, and its Vorono\u{\i} cell $P$ (centered at the origin) is congruent to $\frac{1}{\sqrt{n^2+n}} \widetilde{P_n}$.
We will prove that $\volu K \ge \volu P$. This will suffice:
$$
\volu (K \times \triangle_n) \ge \volu (P \times \triangle_n) = \frac{c_{HZ}(P \times \triangle_n)^n}{n!} = \frac{(n+1)^n}{n!} = \frac{c_{HZ}(K \times \triangle_n)^n}{n!}.
$$
Here we used the results of Section~\ref{sec:equality}. Note that in our scale, $P$ equals $\frac{1}{n+1}$-times the Minkowski sum of the edges of $\triangle_n^\circ$, hence $c_{HZ}(P \times \triangle_n) = n+1$ and $\volu (P \times \triangle_n) = \frac{c_{HZ}(P \times \triangle_n)^n}{n!}$.

Now we are proving the estimate $\volu K \ge \volu P$. We will do it by showing that
$$
\bigcup_{\lambda \in \Lambda} (\lambda + K) = \mathbb{R}^n.
$$
This will imply that $\volu K$ is not less than the volume of the fundamental domain of $\Lambda$, i.e., $\volu P$.

Consider the Delaunay triangulation $T$ of $\mathbb{R}^n$ with respect to $\Lambda$. For each simplex $\sigma$ of $T$, we invoke Lemma~\ref{lem:equiedge} to find the polygonal line $Q_\sigma$ of length $n+1$. Since $c_{HZ}(K \times \triangle_n) = n+1$, such $Q_\sigma$ can be covered by a translate of $K$ (by Theorem~\ref{thm:bezdeks}). Therefore, any simplex of $T$ can be covered by a translate of $K$.

Assume that $\bigcup\limits_{\lambda \in \Lambda} (\lambda + K) \neq \mathbb{R}^n$, that is, there exists $x \in \mathbb{R}^n$ such that $(x + \Lambda) \cap K = \varnothing$. So we found a translate $K - x$ of $K$ that avoids $\Lambda$, while every simplex of $T$ can be covered by a translate of $K$. This contradicts Lemma~\ref{lem:avoidtriangulation}.
\end{proof}

Theorem~\ref{thm:viterbocube} will be proved inductively using the following lemma.

\begin{lemma}
\label{lem:viterboaddsegment}
Let $L \subset \mathbb{R}^{n-1}$ be a convex body. Suppose we established the inequality
$$
\volu (M \times L) \ge \frac{c_{HZ}(M \times L)^{n-1}}{(n-1)!}
$$
for all convex bodies $M \subset \mathbb{R}^{n-1}$. Then for any convex body $K \subset \mathbb{R}^n$ the following holds:
$$
\volu (K \times (L \times [-1,1])) \ge \frac{c_{HZ}(K \times (L \times [-1,1]))^n}{n!}.
$$
(As usual, the relevant products are assumed to be Lagrangian.)
\end{lemma}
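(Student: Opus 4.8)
The plan is to reduce the $n$-dimensional statement to the $(n-1)$-dimensional hypothesis by slicing $K$ with hyperplanes orthogonal to the new segment direction and analyzing how $c_{HZ}$ and volume behave. Write coordinates on $\mathbb{R}^n$ as $(y, s)$ with $y \in \mathbb{R}^{n-1}$ and $s$ the coordinate along the last axis, so that $K \times L \times [-1,1] \subset V \times V^*$ is a Lagrangian product; the factor $[-1,1]$ pairs with the last $q$-coordinate of $K$. For $s \in \mathbb{R}$ let $K_s = \{y : (y,s) \in K\} \subset \mathbb{R}^{n-1}$ be the slice, so that $\volu_n K = \int \volu_{n-1} K_s \, ds$ by Fubini. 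The first step is to obtain an upper bound for $c_{HZ}(K \times L \times [-1,1]) = \xi_{L \times [-1,1]}(K)$ in terms of the quantities $\xi_L(K_s)$. Using the Bezdek--Bezdek characterization (Theorem~\ref{thm:bezdeks}), a short closed billiard trajectory in $K$ with geometry $L \times [-1,1]$ can be taken with few bounces; the key point is that the $\|\cdot\|_{L\times[-1,1]}$-length of a polygonal line splits as the $\|\cdot\|_L$-length of its projection to $\mathbb{R}^{n-1}$ plus the $\ell_1$-length (total variation) of its $s$-component. One builds a competitor trajectory that oscillates in the $s$-direction between two levels $s_1 < s_2$ while executing a short trajectory of the slice-type body in the $y$-directions, paying $2(s_2 - s_1)$ for the vertical travel; optimizing over the levels gives a bound of the shape $c_{HZ}(K \times L \times [-1,1]) \le \xi_L(K_{s^*}) + 2(s_2 - s_1)$ for a suitable choice. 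More robustly, one shows that for any $h > 0$, restricting attention to the sub-slab where $K_s$ is ``large enough'' yields $c_{HZ}(K\times L\times[-1,1]) \le \max_{|s|\le 1-h}\xi_L(K_s) + 2h$, or an integral/averaged version thereof.

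The second step is to combine this with the $(n-1)$-dimensional hypothesis applied to $M = K_s$: for each $s$,
$$
\volu_{n-1}(K_s) \ge \frac{\xi_L(K_s)^{n-1}}{(n-1)!} = \frac{c_{HZ}(K_s \times L)^{n-1}}{(n-1)!}.
$$
Plugging this into $\volu_n K = \int_{-1}^{1} \volu_{n-1} K_s\, ds$ and using the bound from Step~1, one wants to conclude
$$
\volu_n(K) \cdot \volu_n(L \times [-1,1]) = 2\volu_{n-1}(L)\int_{-1}^1 \volu_{n-1}(K_s)\,ds \ge \frac{c_{HZ}(K\times L\times[-1,1])^n}{n!}.
$$
This is a calculus inequality: given a nonnegative function $s \mapsto \xi_L(K_s)$ on $[-1,1]$ with $\int_{-1}^1 \xi_L(K_s)^{n-1} ds$ controlling the left side and with $c_{HZ}(K\times L\times[-1,1])$ controlled by its values plus a vertical-travel term, one must verify the arithmetic works out, with equality when $K = \square_n^\circ$-type bodies (where $K_s$ is a dilate of the $(n-1)$-dimensional optimizer and the profile is affine in $s$). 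I expect the cleanest route is: let $g(s) = \xi_L(K_s)$, note $g$ is concave in $s$ by convexity of $K$ together with the fact that $\xi_L$ is a ``Minkowski-concave'' functional along the slicing (the slices interpolate linearly, and $\xi_L$ of a Minkowski average dominates the average of $\xi_L$), then the vertical oscillation bound gives $c_{HZ} \le g(0) + 2$ in the symmetric case (and one reduces to this case by a Steiner-type symmetrization in $s$, which preserves volume and does not decrease $c_{HZ}$), while $\volu_n K \ge \frac{1}{(n-1)!}\int_{-1}^1 g(s)^{n-1} ds \ge \frac{2}{(n-1)!}\int_0^1 g(s)^{n-1}ds$; a one-variable optimization of $\int_0^1 g^{n-1}$ subject to $g$ concave and $g(0)+2$ fixed then closes the inequality.

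The main obstacle is controlling $c_{HZ}(K \times L \times [-1,1])$ from above \emph{sharply} — a crude bound loses the constant and defeats the isoperimetric statement. The delicate point is that the shortest trajectory in the product geometry need not respect the slicing: it can drift in $s$ while moving in $y$, and a trajectory that is short in $\mathbb{R}^{n-1}$ for one slice $K_s$ may fail to fit inside a neighboring slice. The resolution is to use the Bezdek--Bezdek ``does not fit into a translate'' formulation rather than the reflection law: one shows that a closed polygonal line that does not fit into any translate of $\inte(K)$ can be produced which is ``cheap'' in the vertical direction (oscillating by just over the $s$-extent needed to escape), and whose horizontal shadow does not fit into a translate of the relevant slice; the earlier permutohedral computations (Section~\ref{sec:equality}) and the structure of $\ell_1$-type factors make the vertical cost exactly $2$ in the extremal configuration. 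Handling the possibly non-symmetric, non-smooth profile $g(s)$ — ensuring concavity and justifying the symmetrization step that legitimizes ``$c_{HZ} \le g(0)+2$'' — is where the real work lies; everything after that is the elementary optimization of $\int_0^1 g^{n-1}$.
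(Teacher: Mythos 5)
There is a genuine gap here; the sketch does not close, and it diverges from the paper's (much shorter) argument in a way that creates the very difficulties you then struggle with. First, a normalization confusion runs through the proposal: the factor $[-1,1]$ lives in $V^*$ as part of the momentum body $L\times[-1,1]$, so the $s$-extent of $K\subset V$ is \emph{not} $[-1,1]$, and writing $\volu_n K=\int_{-1}^1\volu_{n-1}K_s\,ds$ presupposes a constraint on $K$ that is not there. Second, and more seriously, the key quantitative input --- an upper bound for $c_{HZ}(K\times L\times[-1,1])$ of the form $g(0)+2$ or $\max_s\xi_L(K_s)+2h$ --- is never established, and as stated it is not sharp: already for $K=\square_n^\circ$ and $L\times[-1,1]=\square_n$ one has $c_{HZ}=4$ while $g(0)+2=6$, so no subsequent ``elementary optimization'' over concave profiles can recover the sharp constant from it. Third, the proposed repairs are themselves unproven claims of real substance: that Steiner symmetrization of $K$ in the $s$-direction does not decrease $c_{HZ}$ of the Lagrangian product is not known (and would be a strong result in its own right), and the concavity of $s\mapsto\xi_L(K_s)$ requires superadditivity of $\xi_L$ under Minkowski sums, which you use without proof. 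You correctly identify the core obstruction --- a closed polygonal line that does not fit into a translate of one slice may fit into a neighboring slice --- but the slicing framework is exactly what creates this obstruction.

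The paper sidesteps all of this by replacing slices with the orthogonal \emph{projection} $\pi_H(K)$ onto the hyperplane $H$ containing $L$. After scaling so that $\xi_{L\times[-1,1]}(K)=4$, every closed polygonal line parallel to $H$ of $\|\cdot\|_L$-length less than $4$ fits into a translate of $K$ and hence into a translate of $\pi_H(K)$, giving $c_{HZ}(\pi_H(K)\times L)\ge 4$ with no loss; separately, the doubled vertical segment argument shows $K$ contains a chord orthogonal to $H$ of length at least $2$. The two pieces of information are then merged by the Rogers--Shephard inequality
$$
\volu_n(K)\ \ge\ \tfrac{1}{n}\,\volu_{n-1}(\pi_H(K))\cdot\max_{x}\volu_1\bigl((x+H^\bot)\cap K\bigr),
$$
which together with the inductive hypothesis applied to $M=\pi_H(K)$ yields the claim in two lines. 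If you want to salvage the slicing route, you would essentially have to reprove this Rogers--Shephard step at the level of the concave profile $g(s)=\xi_L(K_s)$, using $c\le\xi_L(\pi_H(K))$ and $c\le 2\ell_{\max}$ rather than the unsharp oscillation bound; as written, the proposal does not constitute a proof.
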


\begin{proof}
Without loss of generality assume that $L$ contains the origin in its interior. Denote by $H$ the hyperplane in $\mathbb{R}^n$ where $L$ sits.
Note that
$$
(L \times [-1,1])^\circ = \conv (L^\circ \times \{0\} \cup \{\underbrace{(0,\ldots,0}_{n-1})\} \times [-1,1]).
$$
Sometimes this body is called the \emph{free sum} or the \emph{$\ell_1$-sum} of $L^\circ$ and $[-1,1]$.

Scale $K$ so that $c_{HZ}(K \times L \times [-1,1]) = \xi_{L \times [-1,1]}(K) = 4$. This implies that any closed polygonal line of $\|\cdot\|_{L \times [-1,1]}$-length 4 fits into $K$. In particular, any such polygonal line that is parallel to $H$ fits into $\pi_H(K)$, the orthogonal projection of $K$ onto $H$. Thus, $c_{HZ}(\pi_H(K) \times L) \ge 4$. By the assumption,
$$
\volu (\pi_H(K) \times L) \ge \frac{c_{HZ}(\pi_H(K) \times L)^{n-1}}{(n-1)!} \ge \frac{4^{n-1}}{(n-1)!}.
$$
Now we apply the following inequality (proven by Rogers and Shephard~\cite{rogers1958convex} in greater generality):

$$
\volu_n (K) \ge \frac{1}{n} \volu_{n-1} (\pi_H(K)) \cdot \max\limits_{x \in \mathbb{R}^n} \volu_1 ((x + H^\bot) \cap K).
$$

Since $\xi_{L \times [-1,1]}(K) = 4$, $K$ contains a segment orthogonal to $H$ of $\|\cdot\|_{L \times [-1,1]}$-length 2, and we conclude that $\max\limits_{x \in \mathbb{R}^n} \volu_1 ((x + H^\bot) \cap K) \ge 2$.
Finally, we combine all inequalities above:
\begin{multline*}
\volu (K \times L \times [-1,1]) \ge \frac{2}{n} \volu (\pi_H(K)) \volu(L \times [-1,1]) = \frac{4}{n} \volu (\pi_H(K) \times L) \ge \\
\ge \frac{4}{n} \cdot \frac{4^{n-1}}{(n-1)!} = \frac{c_{HZ}(K \times L \times [-1,1])^n}{n!}.
\end{multline*}

\end{proof}

\begin{proof}[Proof of Theorem~\ref{thm:viterbocube}]

The inequality in question is invariant under the transforms of the form $A \times (A^*)^{-1}$, for $A$ affine, so we assume that $\square_n = [-1,1]^n$ is a hypercube centered at the origin. Now the claim follows from~\ref{lem:viterboaddsegment} by induction. The base case
$$
\volu ([a,b] \times [-1,1]) \ge c_{HZ}([a,b] \times [-1,1])
$$
is trivial.

\end{proof}

\begin{remark}
\label{rem:viterbosimplexcube}

Starting from~\ref{thm:viterbosimplex} as the induction base, we see that the same induction proves the $c_{HZ}$-version of Viterbo's conjecture for a Lagrangian product of any convex body $K \subset \mathbb{R}^{k+m}$ with a simplex in $\mathbb{R}^k$ and a parallelotope in $\mathbb{R}^m$:

$$
\volu (K \times \triangle_k \times \square_m) \ge \frac{c_{HZ}(K \times \triangle_k \times \square_m)^{k+m}}{(k+m)!}.
$$

\end{remark}
%
%

\bibliography{permut}
\bibliographystyle{abbrv}
\end{document}